\documentclass[11pt]{article}
\usepackage[english]{babel}

\usepackage[a4paper,margin=1in]{geometry}
\usepackage{amsmath,amssymb,amsthm,mathtools,bm,mathrsfs}
\usepackage{enumitem}
\usepackage{microtype}
\usepackage{hyperref}

\usepackage[english]{babel}
\usepackage{geometry}
\usepackage{pdfpages}
\usepackage{amsmath}
\usepackage{amssymb}
\usepackage{amsthm}
\usepackage{graphicx}
\usepackage{float}
\usepackage{array}
\usepackage{xcolor}
\usepackage{listings}
\usepackage{tikz}
\usetikzlibrary{arrows.meta, calc, positioning}

\definecolor{keywordcolor}{rgb}{0.7,0.1,0.1}
\definecolor{commentcolor}{rgb}{0.4,0.4,0.4}
\definecolor{symbolcolor}{rgb}{0.0,0.1,0.6}
\definecolor{sortcolor}{rgb}{0.1,0.5,0.1}

\usepackage{lstlean}

\lstdefinestyle{LeanCode}{
  language=lean,
  basicstyle=\ttfamily\fontsize{8pt}{8pt}\selectfont,
  keywordstyle=[1]{\color{keywordcolor}},
  keywordstyle=[2]{\color{sortcolor}},
  commentstyle=\itshape\color{commentcolor},
  stringstyle=\ttfamily,
  breaklines=true,
  breakatwhitespace=false,
  columns=fullflexible,
  keepspaces=true,
  showstringspaces=false,
  tabsize=2,
  extendedchars=false,
  captionpos=b
}

\usepackage{newunicodechar}
\newunicodechar{⦃}{\ensuremath{\{\!\{}}
\newunicodechar{⦄}{\ensuremath{\}\!\}}}
\newunicodechar{⋂}{\ensuremath{\bigcap}}
\newunicodechar{ₗ}{\ensuremath{_{l}}}
\newunicodechar{ᵢ}{\ensuremath{_{i}}}
\newunicodechar{ℝ}{\ensuremath{\mathbb{R}}}
\newunicodechar{ₜ}{\ensuremath{_{t}}}

\newtheorem{theorem}{Theorem}
\newtheorem{proposition}[theorem]{Proposition}
\newtheorem{corollary}[theorem]{Corollary}

\theoremstyle{definition}
\newtheorem{definition}{Definition}

\newcommand{\R}{\mathbb{R}}
\newcommand{\Sone}{\mathbb{S}^{1}}
\newcommand{\Int}{\operatorname{int}}
\newcommand{\diam}{\operatorname{diam}}
\newcommand{\dist}{\operatorname{dist}}
\newcommand{\conv}{\operatorname{conv}}
\newcommand{\Area}{\operatorname{Area}}

\newcommand{\SO}{\operatorname{SO}}
\newcommand{\OO}{\operatorname{O}}
\newcommand{\op}{\mathrm{op}}

\title{Closed escape path of smallest diameter in forest: dual formulation of Lebesgue's universal covering problem}
\author{Zhipeng Deng}
\date{} 

\begin{document}
\maketitle

\begin{abstract}
Lebesgue’s universal covering problem asks for the minimum area convex planar region capable of containing a congruent copy of every planar set of diameter at most one. In this paper, we develop an exact dual formulation of this problem through a minimum diameter analogue of Bellman’s lost-in-a-forest problem. For a compact convex forest $F$, we define the critical escape diameter $D(F)$ as the minimum diameter of a closed path whose trace cannot be placed, under any rigid motion, entirely in the interior of $F$. We prove attainment of this minimum and establish a diameter cover/escape duality showing that the normalized body $D(F)^{-1}F$ is a Lebesgue universal cover. Consequently, the Lebesgue universal covering constant admits the exact representation
\[
\mathcal L=\inf_F\frac{\operatorname{Area}(F)}{D(F)^2},
\]
where the infimum ranges over compact convex planar bodies with nonempty interior. By reversing the rigid motion, we further characterize escape as intersection of a fixed path with every oppositely transformed boundary of $F$, and derive equivalent continuous curve, convex body, and support function optimization formulations. To make the infinite dimensional problem computationally tractable while retaining rigorous control of approximation error, we discretize the compact configuration space by an $\eta_m$-net and formulate the resulting problem as a minimum diameter traveling salesman problem with neighborhoods; for polygonal forests, an exact mixed-integer second-order cone formulation is obtained. We prove the quantitative certification, which yields convergent, rigorously certified universal cover bounds. The framework replaces finite tests of prescribed constant width shapes by a unified optimization over the full configuration space and extends naturally to other congruent and translative universal cover problems.
\end{abstract}

\noindent \textbf{Keywords:} 
Lebesgue's universal covering problem, Bellman’s lost-in-a-forest problem, Universal cover, Duality.
\\

\noindent \textbf{Classification}

Metric Geometry (math.MG), Optimization and Control (math.OC)

49K30, 49Q10, 52A40

\tableofcontents

\section{Introduction}

Lebesgue's universal covering problem is an unsolved problem in geometry that asks for the convex shape of smallest area that can cover every planar set of diameter one. A shape covers a set if it contains a congruent subset, which may be rotated and translated \cite{Brass2005}. 

The problem was posed by Henri Lebesgue in a letter to Pál. It was published by Pál in 1920 \cite{Pál1920}. As planar set of diameter one can be embedded inside a curve of constant unit width, much of the literature revolves around coverage of curves of constant unit width, such as circles, Reuleaux triangles, and Reuleaux polygons \cite{Brass2005-1} \cite{Gibbs2014}. 

For upper bound of minimum area, it has historically been a process of geometric refinement, removing tiny slivers of area from known covering shapes. Pál \cite{Pál1920} proved that a regular hexagon circumscribed around a circle of unit diameter is a universal cover. Pál then demonstrated that two corners of this hexagon could be truncated without losing the covering property, establishing the first rigorous upper bound. Then Sprague \cite{Sprague1936} proved that a small portion of a third corner could be removed. After many years, Hansen \cite{Hansen1992} successfully reduced the upper bound further by discovering two additional regions that could be safely removed from Sprague's cover. The search was revitalized by \cite{Gibbs2014} \cite{Baez2015}, who utilized computerized search methods to shave off even more area. Pushing these methods to limits, Gibbs \cite{Gibbs2018} achieved further infinitesimal reductions. The best known upper bound currently is 0.84409. 

As for lower bounds of minimum area, they require proving that no convex shape below a certain area can simultaneously cover all unit diameter sets. This is typically achieved by selecting a finite family of diameter one shapes and calculating the minimum convex area required to cover just that family. Brass \cite{Brass2005-1} established a foundational lower bound by analyzing the optimal overlapping alignments of three specific shapes: circle, Reuleaux triangle, and Reuleaux pentagon. More recently, Xie \cite{Xie2026} reproduced this method by providing a computationally certified lower bound. Chen \cite{Chen2026} improved the bounds on the minimum volume of a universal cover in $\mathbb{R}^3$.

However, these previous approaches were mostly based on geometric methods. They yield only constructed upper and lower bounds, which were limited to considering just a few specific polygons of constant width. The difficulty lies in the fact that there are infinitely many shapes with a diameter of one, and the space involving translation and rotation are infinite. Progress of this problem has been slow, and no general method exists.

This paper first defines Lebesgue's universal covering problem, one kind of universal cover problem \cite{Brass2005}. Then, by exploiting the duality between Bellman’s lost-in-s-forest problem and Moser’s worm problem \cite{FinchWetzel2004, Deng2024, Deng2026, Deng2026-2, Deng2026-3}, our contribution is to formally construct dual problem of Lebesgue's universal covering problem. Specifically, it is finding a closed escape path with the shortest diameter in a forest, where the solution to this dual problem provides an upper bound for Lebesgue's problem. Furthermore, we present the formulation for solving this dual problem. It places no specific and predefined requirements on the shape of constant unit width curves. Thereby, we enable the iterative refinement of the dual problem to optimize the upper bound for Lebesgue's universal covering problem.

\section{Proof of concept}

Figure 1 shows the proof of concept. A universal cover problem and Bellman's lost-in-a-forest problem are two views of the same rigid-motion obstruction.  In the covering view, a fixed region is moved relative to every member of a prescribed family until the member lies inside the region.  In the escape view, the path is fixed and the forest boundary is translated and rotated in the opposite direction.  The path succeeds precisely when it intersects every transformed boundary.

\begin{figure}[htbp]
\centering
\resizebox{\linewidth}{!}{%
\begin{tikzpicture}[
    x=0.011cm, y=-0.011cm,          
    font=\sffamily\fontsize{11.8}{14.1}\selectfont,
    box/.style={draw, thin},
    txt/.style={anchor=west, align=left, inner sep=0pt},
    arr/.style={semithick, -{Latex[length=1.7mm, width=1.4mm]}},
    dbl/.style={semithick, {Latex[length=1.7mm, width=1.4mm]}-{Latex[length=1.7mm, width=1.4mm]}},
    line join=miter
]
\draw[box] (570,10) rectangle (882,70);
\node[txt] at (586,40) {\textbf{Universal cover}};

\draw[box] (12,135) rectangle (726,240);
\node[txt] at (28,187.5) {\textbf{Lebesgue\textquotesingle s universal covering problem}\\
                         Cover curve of constant unit width};

\draw[box] (842,135) rectangle (1290,240);
\node[txt] at (858,187.5) {\textbf{Moser's worm problem}\\
                          Cover curve of unit length};

\draw[box] (101,323) rectangle (638,473);
\node[txt] at (117,398) {\textbf{Forest problem variation}\\
                        Escape forest with closed path\\
                        of smallest diameter};

\draw[box] (743,346) rectangle (1388,451);
\node[txt] at (759,398.5) {\textbf{Bellman\textquotesingle s lost-in-a-forest problem}\\
                          Escape forest with shortest path};

\draw[box] (491,578) rectangle (961,684);
\node[txt] at (507,631) {Anchor path, translate and\\
                        rotate forest boundary};

\draw[box] (523,765) rectangle (929,824);
\node[txt] at (539,794.5) {Boundary intersection};

\draw[box] (463,942) rectangle (989,1048);
\node[txt] at (479,995) {Traveling Salesman Problem\\
                        with Neighborhoods (TSPN)};

\draw[arr] (570,40) -| (370,135);
\draw[arr] (882,40) -| (1065,135);

\draw[dbl] (370,240) -- (370,323);
\node[anchor=east, inner sep=0pt] at (363,283) {Duality};
\draw[dbl] (1065,240) -- (1065,346);
\node[anchor=east, inner sep=0pt] at (1055,283) {Duality};

\draw[semithick] (370,473) -- (370,535) -- (1065,535) -- (1065,451);
\node[anchor=south, inner sep=0pt] at (730,525) {Shift perspective};
\draw[arr] (726,535) -- (726,578);

\draw[arr] (726,684) -- (726,765);

\draw[arr] (726,824) -- (726,942);
\node[txt] at (541,887.5) {General\\ framework};
\end{tikzpicture}%
}
\caption{Proof of concept}
\label{fig:proof-of-concept}
\end{figure}

For Lebesgue's problem the objects to be covered are all planar sets of diameter at most one.  The planar completion theorem reduces this family to convex bodies of constant width one.  Their boundaries are closed convex curves of diameter one.  This suggests replacing the usual length objective in Bellman's problem by the diameter of a closed escape path.  For a fixed convex forest $F$, define the critical escape diameter to be the least diameter of a closed path no rigid copy of which lies in $\Int F$.  Scaling $F$ by the reciprocal of this critical diameter produces a universal cover for all sets of diameter one.  Thus the area ratio
\[
    \frac{\Area(F)}{D(F)^2}
\]
is an admissible upper bound for Lebesgue's universal covering constant.

The computational viewpoint follows the same reversal of motion.  Anchor the path in a canonical coordinate system.  For every unknown starting point and orientation, translate and rotate the forest boundary oppositely.  The anchored path must intersect every resulting boundary.  Sampling the compact configuration space gives finitely many boundary neighborhoods.  The sampled problem is therefore a traveling salesman with neighborhoods problem with a minimum-diameter objective.  Refining the configuration-space mesh yields certified lower and upper bounds converging to the continuous critical diameter.

\section{Formulation of Lebesgue's universal covering problem and dual problem}

\subsection{Universal covers and reduction to constant width curve}

Let $\mathcal Q=\OO(2)$ and let a planar rigid motion be written as
\[
    g_{a,Q}(x)=a+Qx,
    \qquad a\in\R^2,
    \quad Q\in\mathcal Q.
\]
Using $\SO(2)$ instead of $\OO(2)$ gives the orientation-preserving version; every argument below is unchanged.  The full orthogonal group is used because congruence in Lebesgue's problem conventionally permits reflection.

\begin{definition}[Cover and universal cover]
A set $F\subset\R^2$ \emph{covers} a compact set $S\subset\R^2$ if
\[
    g_{a,Q}(S)\subset F
\]
for some rigid motion $g_{a,Q}$.  A \emph{Lebesgue universal cover} is a compact convex set $F$ with nonempty interior that covers every compact planar set $S$ satisfying $\diam S\leq 1$.  The Lebesgue universal covering constant is
\[
    \mathcal L
    :=
    \inf\left\{
        \Area(F):
        F\subset\R^2 \text{ is a Lebesgue universal cover}
    \right\}.
\]
\end{definition}

The use of $\diam S\leq1$ instead of $\diam S=1$ does not change the problem.  The completion theorem for planar sets states that every compact set of diameter at most one is contained in a compact convex body of constant width one; see \cite{Vrecica1981,BaezBagdasaryanGibbs2015}.  Recall that a convex body $W$ has constant width one if its support function satisfies
\[
    h_W(u)+h_W(-u)=1,
    \qquad u\in\Sone.
\]
Such a body is closed and convex, and its boundary is a simple closed convex curve of diameter one.

\begin{theorem}[Constant-width reduction]
For a compact convex set $F$ with nonempty interior, the following are equivalent:
\begin{enumerate}[label=\textup{(\roman*)}]
    \item $F$ is a Lebesgue universal cover;
    \item $F$ covers every planar convex body of constant width one;
    \item $F$ covers the boundary of every planar convex body of constant width one.
\end{enumerate}
\end{theorem}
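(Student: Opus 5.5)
We will prove the cycle of implications (i)$\Rightarrow$(ii)$\Rightarrow$(iii)$\Rightarrow$(i).

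\textbf{(i)$\Rightarrow$(ii).} This is immediate. A convex body $W$ of constant width one is compact. Its diameter equals its maximal width, which is one, so $\diam W\le 1$. By the definition of a Lebesgue universal cover, $F$ covers $W$.

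\textbf{(ii)$\Rightarrow$(iii).} This is also immediate. If $g_{a,Q}(W)\subset F$, then $g_{a,Q}(\partial W)\subset g_{a,Q}(W)\subset F$, so the same rigid motion covers the boundary.

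\textbf{(iii)$\Rightarrow$(i).} This is the substantive direction, and we split it into two steps.

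\emph{Step 1: reduce to constant width.} Let $S$ be compact with $\diam S\le1$. By the completion theorem cited above, there is a convex body $W$ of constant width one with $S\subset W$. (If $S$ is a single point or empty, it trivially lies in some such $W$, for instance a disc of diameter one.) So it suffices to show that $F$ covers $W$.

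\emph{Step 2: pass from the boundary to the body.} By (iii) there is a rigid motion $g=g_{a,Q}$ with $g(\partial W)\subset F$. Since $g$ is affine, $g(\conv \partial W)=\conv g(\partial W)$, and this is contained in $F$ because $F$ is convex. It remains to check that $\conv\partial W=W$. This holds for any compact convex set with nonempty interior. Given $x\in W$, take any line through $x$. Its intersection with $W$ is a compact segment whose endpoints lie in $\partial W$, and $x$ lies between them. A width-one body has nonempty interior, since it contains a segment of length one in every direction and is two-dimensional. Hence $g(W)\subset F$, and therefore $g(S)\subset F$.

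We expect the main obstacle to be Step 1, specifically the use of the completion theorem. Everything else is elementary convexity. We will invoke the cited result rather than reprove it. For completeness one could sketch the standard argument: complete $\conv S$ by a Zorn or Pál-type enlargement to a maximal set of diameter one, and note that diametrically complete planar sets have constant width.

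Two small points should be checked along the way. First, the reflection allowed by $\OO(2)$ plays no role, so the argument is identical for $\SO(2)$. Second, "covers" means containment in $F$ itself, not in its interior, so closedness of $F$ is not even needed in Step 2.
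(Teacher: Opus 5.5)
Your proof is correct and takes essentially the same approach as the paper: the planar completion theorem reduces arbitrary sets to constant-width bodies, and the identity $\conv\partial W=W$ combined with convexity of $F$ passes from boundaries to bodies. The only differences are organizational. You close a single cycle with (iii)$\Rightarrow$(i), whereas the paper proves (i)$\Leftrightarrow$(ii) and (ii)$\Leftrightarrow$(iii) separately. You also spell out the line-through-$x$ justification of $\conv\partial W=W$, which the paper uses without comment.
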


\begin{proof}
The implication \textup{(i)}$\Rightarrow$\textup{(ii)} is immediate because constant-width bodies have diameter one.  For \textup{(ii)}$\Rightarrow$\textup{(i)}, let $S$ be compact with $\diam S\leq1$.  By the planar completion theorem, there exists a convex body $W$ of constant width one with $S\subset W$.  A rigid copy of $W$ contained in $F$ contains the corresponding rigid copy of $S$.

The implication \textup{(ii)}$\Rightarrow$\textup{(iii)} is immediate.  Conversely, if a rigid copy of $\partial W$ lies in the convex set $F$, then
\[
    g_{a,Q}(W)
    =
    \conv\bigl(g_{a,Q}(\partial W)\bigr)
    \subset F,
\]
so \textup{(iii)} implies \textup{(ii)}.
\end{proof}

The disk, the Reuleaux triangle, and Reuleaux polygons are important members of the constant-width class, but checking only a finite collection cannot establish universality.  The theorem requires all constant-width-one bodies, equivalently all of their boundary curves.

\subsection{Recall: Moser’s worm problem and Bellman's forest problem}

Let $\mathscr C_{\ell}$ denote the family of rectifiable planar arcs of length at most $\ell$.  For a compact convex forest $F$, define its interior worm-covering radius by
\[
    \rho_{\mathrm{len}}(F)
    :=
    \sup\left\{
        \ell\geq0:
        \text{every }\gamma\in\mathscr C_{\ell}
        \text{ has a rigid copy in }\Int F
    \right\}.
\]
Define escape length of Bellman's forest problem by
\[
    \beta_{\mathrm{len}}(F)
    :=
    \inf\left\{
        \operatorname{len}(\gamma):
        \text{no rigid copy of }\gamma
        \text{ is contained in }\Int F
    \right\}.
\]
Equivalently, a hiker follows a fixed canonical path while the unknown starting point and initial orientation range over all possibilities in $F$; the path is an escape path precisely when every such physical realization meets $\partial F$.  Finch and Wetzel proved the corresponding threshold identity for bounded forests \cite[Theorem~3]{FinchWetzel2004}.

\begin{proposition}[Length-cover/escape duality with endpoint closure]
\label{prop:length-cover-escape-duality}
Let $F\subset\mathbb R^2$ be compact with nonempty interior.  For a
rectifiable planar arc $\gamma$, write $\Gamma_\gamma$ for its trace, and write
$\Gamma_\gamma\prec F$ if some Euclidean rigid motion maps $\Gamma_\gamma$
into $\operatorname{Int}F$.  Define
\[
 \rho_{\mathrm{len}}^{\circ}(F)
 :=\sup\bigl\{\ell\ge 0:
        \Gamma_\gamma\prec F
        \text{ for every rectifiable }\gamma
        \text{ with }\operatorname{len}(\gamma)\le\ell\bigr\},
\]
and
\[
 \beta_{\mathrm{len}}(F)
 :=\inf\bigl\{\operatorname{len}(\gamma):
        \Gamma_\gamma\not\prec F\bigr\}.
\]
Then
\[
 \rho_{\mathrm{len}}^{\circ}(F)=\beta_{\mathrm{len}}(F)=:L.
\]
Moreover, $0<L<\infty$, every rectifiable arc of length strictly less than
$L$ has a rigid copy in $\operatorname{Int}F$, and every rectifiable arc of
length at most $L$ has a rigid copy in the closed set $F$.

If, in addition, $F$ is convex and $A=\operatorname{Area}(F)$, then
$L^{-1}F$ is a convex cover for every rectifiable planar arc of length at most
one and
\[
 \operatorname{Area}(L^{-1}F)=\frac{A}{L^2}.
\]
Consequently, $A/L^2$ is an upper bound for the convex version of Moser's
worm problem.
\end{proposition}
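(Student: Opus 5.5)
The plan is to prove the threshold identity by a monotonicity argument, then the endpoint closure by compactness, and finally the scaling statement.

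\textbf{Step 1: the identity $\rho^\circ_{\mathrm{len}}=\beta_{\mathrm{len}}$.} Let
\[
E=\{\ell\ge0:\text{every rectifiable }\gamma\text{ with }\operatorname{len}(\gamma)\le\ell\text{ satisfies }\Gamma_\gamma\prec F\}.
\]
The set $E$ is downward closed, and $\rho^\circ_{\mathrm{len}}=\sup E$.

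Take any $\gamma$ with $\Gamma_\gamma\not\prec F$. Then $\operatorname{len}(\gamma)\notin E$, so $\operatorname{len}(\gamma)\ge\sup E$, because $E$ is an interval starting at $0$. Hence $\beta_{\mathrm{len}}\ge\rho^\circ_{\mathrm{len}}$.

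Conversely, take any $\ell<\beta_{\mathrm{len}}$. Every arc of length at most $\ell$ must satisfy $\prec F$, since otherwise its length would be at least $\beta_{\mathrm{len}}$. So $\ell\in E$, and therefore $\rho^\circ_{\mathrm{len}}\ge\beta_{\mathrm{len}}$.

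The same argument gives the strict statement: any arc of length $<L$ lies in $E$ and so has a rigid copy in $\operatorname{Int}F$.

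\textbf{Step 2: the bounds $0<L<\infty$.} Since $\operatorname{Int}F\neq\emptyset$, it contains a disk $B(c,r)$. Any arc of length $\le r$ lies in the ball of radius $r'<r$ about its first point, up to trivial endpoint care; translating that point to $c$ places the arc in $\operatorname{Int}F$. So $L\ge r>0$ (or $r/2$ if one wants slack).

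Since $F$ is compact, $\operatorname{diam}F<\infty$. A straight segment of length $\operatorname{diam}F+1$ cannot fit inside $F$. So $L\le\operatorname{diam}F+1<\infty$.

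\textbf{Step 3: endpoint closure, the main obstacle.} Fix $\gamma$ with $\operatorname{len}(\gamma)\le L$, parametrized on $[0,1]$ proportionally to arc length. For $t<1$, the subarc $\gamma_t=\gamma|_{[0,t]}$ has length $<L$ whenever $\operatorname{len}(\gamma)=L$; if $\operatorname{len}(\gamma)<L$, Step 1 already finishes. To avoid needing $\gamma_t$ to be a "shrinking" of $\gamma$ in the right sense, use the homothetic copies $\gamma_s=s\gamma$ with $s<1$ instead: these are rectifiable of length $sL<L$.

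By Step 1 there exist motions $g_{a_s,Q_s}$ with $g_{a_s,Q_s}(s\Gamma_\gamma)\subset\operatorname{Int}F$. Normalize $\gamma(0)=0$. Then $a_s=g_{a_s,Q_s}(s\gamma(0))\in F$ is bounded, and $\mathcal Q$ is compact. Extract a subsequence $s_k\to1$ along which $a_{s_k}\to a$ and $Q_{s_k}\to Q$.

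For each point $x\in\Gamma_\gamma$ we have $a_{s_k}+Q_{s_k}s_kx\in F$, and this converges to $a+Qx$. Because $F$ is closed, $a+Qx\in F$, so $g_{a,Q}(\Gamma_\gamma)\subset F$.

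\textbf{Step 4: the scaled convex cover.} Now assume $F$ is convex. An arc $\gamma$ of length $\le1$ gives $L\gamma$, of length $\le L$, which by Step 3 has a rigid copy in $F$. Scaling by $L^{-1}$ commutes with rigid motions up to rescaling the translation, so $\gamma$ has a rigid copy in $L^{-1}F$. The set $L^{-1}F$ is convex, and its area is $A/L^2$ by $2$-homogeneity of area.

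The infimum in the convex Moser problem is over convex covers, so it is bounded above by $A/L^2$. The only step requiring real care is Step 3, whose compactness extraction relies on $F$ being closed and bounded and on $\mathcal Q$ being compact.
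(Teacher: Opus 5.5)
Your proposal is correct and follows essentially the same route as the paper. Both use the order-theoretic threshold argument, positivity from an interior disk, finiteness from a segment longer than $\operatorname{diam}F$, endpoint closure by contracting about $\gamma(0)$ and taking a convergent subsequence of placements in the compact set $F\times\mathrm{O}(2)$, and rescaling by $L^{-1}$. The only wrinkle is the ``endpoint care'' in Step 2, which disappears if you choose a closed disk $\overline B(c,r)\subset\operatorname{Int}F$ from the start, as the paper does.
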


\begin{proof}
Put
\[
 \mathcal B_F
 :=\bigl\{\operatorname{len}(\gamma):
             \Gamma_\gamma\not\prec F\bigr\}.
\]
The set $\mathcal B_F$ is nonempty.  Indeed, since $F$ is compact, its
diameter $D$ is finite, and a line segment of length greater than $D$ cannot
have a rigid copy even in $F$.  It is bounded below by zero, so
$L:=\inf\mathcal B_F$ is a finite nonnegative real number.

We first prove the two strict threshold implications.  If $\ell<L$ and
$\operatorname{len}(\gamma)\le\ell$, then $\gamma$ cannot be non-fitting;
otherwise $\operatorname{len}(\gamma)\in\mathcal B_F$ and hence
\[
 L\le \operatorname{len}(\gamma)\le\ell<L,
\]
a contradiction.  Thus every arc of length at most $\ell$ fits in
$\operatorname{Int}F$, and therefore
\[
 \ell\le \rho_{\mathrm{len}}^{\circ}(F).
\]
Since this holds for every $\ell<L$, density of the real order implies
$L\le\rho_{\mathrm{len}}^{\circ}(F)$: otherwise one could choose
$\ell$ with
$\rho_{\mathrm{len}}^{\circ}(F)<\ell<L$, contradicting
$\ell\le\rho_{\mathrm{len}}^{\circ}(F)$.

Conversely, if $\ell>L$, the defining property of the infimum gives
$r\in\mathcal B_F$ with $r<\ell$.  Hence there is a rectifiable arc $\gamma$
such that
\[
 \Gamma_\gamma\not\prec F,
 \qquad
 \operatorname{len}(\gamma)=r<\ell.
\]
Thus the universal interior-covering property fails at every level
$\ell>L$.  Hence every member of the set whose supremum defines
$\rho_{\mathrm{len}}^{\circ}(F)$ is at most $L$, and therefore
$\rho_{\mathrm{len}}^{\circ}(F)\le L$.  This proves
\[
 \rho_{\mathrm{len}}^{\circ}(F)=L=\beta_{\mathrm{len}}(F).
\]

The positivity of $L$ follows from the nonempty interior of $F$.  Choose
$z\in\operatorname{Int}F$ and $r>0$ such that
$\overline B(z,r)\subset\operatorname{Int}F$.  If
$\operatorname{len}(\gamma)\le r$, translate the initial point $\gamma(0)$
to $z$.  For every parameter value $s$, the chord-length bound for a
rectifiable arc gives
\[
 \|\gamma(s)-\gamma(0)\|\le\operatorname{len}(\gamma)\le r.
\]
The translated trace is therefore contained in
$\overline B(z,r)\subset\operatorname{Int}F$.  Hence $L\ge r>0$.

It remains to justify closed containment at the endpoint.  Let $\gamma$ be
rectifiable with $\operatorname{len}(\gamma)\le L$, set $c=\gamma(0)$, and
choose $t_n\in(0,1)$ with $t_n\to1$.  Define the contraction
\[
 \gamma_n(s):=c+t_n(\gamma(s)-c).
\]
Then
\[
 \operatorname{len}(\gamma_n)
 =t_n\operatorname{len}(\gamma)
 \le t_nL<L.
\]
By the strict subcritical result, for every $n$ there are
$a_n\in\mathbb R^2$ and $Q_n\in O(2)$ such that
\[
 a_n+Q_n\Gamma_{\gamma_n}\subset\operatorname{Int}F.
\]
Set $b_n:=a_n+Q_nc$.  Since $c\in\Gamma_{\gamma_n}$, one has $b_n\in F$.
Compactness of $F$ and of $O(2)$ yields a subsequence, not relabelled, and
limits $b\in F$, $Q\in O(2)$ such that
\[
 b_n\to b,
 \qquad
 Q_n\to Q.
\]
For each $x\in\Gamma_\gamma$,
\[
 b_n+t_nQ_n(x-c)\in F
\]
and
\[
 b_n+t_nQ_n(x-c)\longrightarrow b+Q(x-c).
\]
Because $F$ is closed,
$b+Q(x-c)\in F$.  Therefore
\[
 b+Q(\Gamma_\gamma-c)\subset F,
\]
which is a rigid copy of $\Gamma_\gamma$ in $F$.

Finally, let $\eta$ be any rectifiable arc of length at most one.  The dilated
arc $L\eta$ has length at most $L$, so it has a rigid copy in $F$.  Dividing
that placement by $L$ gives a rigid copy of $\eta$ in $L^{-1}F$.  Homothety by
$L^{-1}$ preserves compactness and convexity and multiplies planar area by
$L^{-2}$.  Hence
\[
 \operatorname{Area}(L^{-1}F)=A/L^2,
\]
which proves the asserted worm-covering bound.
\end{proof}

It is noted that 
$\rho_{\mathrm{len}}^{\circ}(F)=\beta_{\mathrm{len}}(F)$ does not by itself
assert that every arc of length exactly $L$ fits in $\operatorname{Int}F$.
Indeed, strict interior coverage at $L$ holds if and only if the infimum in the
definition of $\beta_{\mathrm{len}}(F)$ is not attained by a non-fitting arc.
What compactness supplies at the threshold is closed containment in $F$.

\subsection{The smallest diameter escape forest problem}

A \emph{closed path} is a continuous map
\[
    \gamma:[0,1]\to\R^2,
    \qquad
    \gamma(0)=\gamma(1),
\]
and its trace is $\Gamma=\gamma([0,1])$.  Its diameter is
\[
    \diam\gamma
    :=
    \diam\Gamma
    =
    \max_{s,t\in[0,1]}
    \|\gamma(s)-\gamma(t)\|.
\]
Because the objective is translation invariant, the path may be normalized by $\gamma(0)=\gamma(1)=0$.

\begin{definition}[Closed diameter-escape path]
Let $F\subset\R^2$ be a compact convex set with nonempty interior.  A closed path $\gamma$ is a \emph{diameter-escape path} for $F$ if
\[
    a+Q\Gamma\not\subset\Int F
    \qquad
    \text{for every }a\in\R^2,
    \quad Q\in\mathcal Q.
\]
The critical escape diameter is
\begin{equation}
\label{eq:critical-diameter}
    D(F)
    :=
    \inf\left\{
        \diam\gamma:
        \gamma \text{ is a closed diameter-escape path for }F
    \right\}.
\end{equation}
\end{definition}

The term ``area of the boundary'' is not used: the relevant quantity is the planar area $\Area(F)$ of the forest region.

Define also the interior diameter-covering radius
\[
    \rho_{\mathrm{diam}}(F)
    :=
    \sup\left\{
        d\geq0:
        \begin{array}{l}
        \text{every compact }S\subset\R^2\text{ with }\diam S<d\\
        \text{has a rigid copy contained in }\Int F
        \end{array}
    \right\}.
\]

\begin{theorem}[Diameter-cover/escape duality and exact normalization]
\label{thm:diameter-duality}
Let $F\subset\R^2$ be compact, convex, and have nonempty interior.  Then:
\begin{enumerate}[label=\textup{(\alph*)}]
    \item the infimum in \eqref{eq:critical-diameter} is attained by a rectifiable closed path;
    \item
    $
        D(F)=\rho_{\mathrm{diam}}(F);
    $
    \item the scaled body
    $
        F^{\sharp}:=D(F)^{-1}F
    $
    is a Lebesgue universal cover and
    \[
        \Area(F^{\sharp})
        =
        \frac{\Area(F)}{D(F)^2};
    \]
    \item the Lebesgue constant admits the exact dual representation
    \begin{equation}
    \label{eq:global-dual}
        \mathcal L
        =
        \inf_{F}
        \frac{\Area(F)}{D(F)^2},
    \end{equation}
    where the infimum is over all compact convex planar bodies with nonempty interior.
\end{enumerate}
\end{theorem}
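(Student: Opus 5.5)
Let me write $\mathcal E$ for the set of diameters of closed diameter-escape paths and $\mathcal N$ for the set of diameters of compact sets $S$ with no rigid copy in $\Int F$. I would prove (b) first, by a threshold argument parallel to Proposition~\ref{prop:length-cover-escape-duality}, then attainment (a), then normalization (c) and (d).

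\textbf{Part (b).} The key reduction is that $\inf\mathcal E=\inf\mathcal N$. Every escape path has a trace that is a compact non-fitting set, so $\mathcal E\subset\mathcal N$. Conversely, let $S$ be non-fitting.
\begin{itemize}
    \item By the planar completion theorem, $S\subset W$ for a convex body $W$ of constant width $\diam S$ (scale the width-one statement).
    \item Since $F$ is convex, a rigid copy of $\partial W$ lies in $\Int F$ if and only if the copy of $W=\conv\partial W$ does. This uses that $\Int F$ is convex, as in the proof of the constant-width reduction.
    \item Hence $\partial W$ is non-fitting. Parametrizing it as a rectifiable closed path gives an escape path of diameter $\diam S$.
\end{itemize}
So $D(F)=\inf\mathcal N$, and the infimum is realized by boundaries of constant-width bodies, which are rectifiable.

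The sup/inf identity $\rho_{\mathrm{diam}}(F)=\inf\mathcal N$ then follows by the same order argument as in Proposition~\ref{prop:length-cover-escape-duality}. If $d\le \inf\mathcal N$, every $S$ with $\diam S<d$ fits, so $d\le\rho_{\mathrm{diam}}$. If $d>\inf\mathcal N$, some non-fitting $S$ has diameter below $d$. Finiteness and positivity of $D(F)$ come from two facts: a segment longer than $\diam F$ is non-fitting, and a small set fits inside a ball contained in $\Int F$.

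\textbf{Part (a): the main obstacle.} I would take a minimizing sequence of constant-width bodies $W_n$ with widths $w_n\downarrow D:=D(F)$, each non-fitting. Translate them to contain $0$. By Blaschke selection they converge in Hausdorff distance to a convex body $W$ of constant width $D$; constant width passes to the limit through support functions. It remains to show $W$ is non-fitting.

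Suppose instead that $a+QW\subset\Int F$. Since $a+QW$ is compact, it has positive distance $\delta$ from $\partial F$. For $n$ large we have $d_H(W_n,W)<\delta$, so $a+QW_n\subset\Int F$, a contradiction. The non-fitting property is thus closed under Hausdorff limits because $\Int F$ is open. Then $\partial W$, parametrized by arc length over $[0,1]$, is a rectifiable closed path of diameter $D$ that escapes, which gives (a). This compactness step is the one I expect to need the most care, in particular checking the uniformity $\delta$ and that limits stay in the constant-width class. A subtlety is whether $w_n\to D$ could happen with $D$ not in $\mathcal N$; the closedness argument above rules this out.

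\textbf{Parts (c) and (d).} For (c), let $S$ have $\diam S\le 1$. For $t<1$, the scaled set $tD\,S$ has diameter less than $D=\rho_{\mathrm{diam}}$, so it fits in $\Int F$. Rescaling by $D^{-1}$, $tS$ fits in $F^\sharp$. Letting $t\to1$ with the compactness argument from the endpoint part of Proposition~\ref{prop:length-cover-escape-duality} gives a rigid copy of $S$ in the closed set $F^\sharp$. The area identity is homogeneity of area under dilation.

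For (d):
\begin{itemize}
    \item The bound $\mathcal L\le\inf_F \Area(F)/D(F)^2$ follows from (c).
    \item Conversely, let $U$ be any Lebesgue universal cover. I would show $D(U)\ge1$. For every constant-width-one body $W$ and every $t<1$, the copy of $tW$ sits in $U$ and, after shrinking toward an interior point of $U$, lies in $\Int U$. Hence no set of diameter below $1$ is non-fitting, so $D(U)\ge 1$.
    \item Therefore $\Area(U)/D(U)^2\le\Area(U)$, and taking the infimum over $U$ gives equality in \eqref{eq:global-dual}.
\end{itemize}
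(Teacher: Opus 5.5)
Your proof is correct, but it is organized differently from the paper's.

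\textbf{The paper's route.} The paper proves (a) first. It passes to $K=\conv\Gamma$, handles the segment case by an out-and-back traversal, normalizes by the Steiner point, and applies Blaschke selection to arbitrary compact convex minimizers. Only then does it prove (b). There it uses completion for $D(F)\le\rho_{\mathrm{diam}}(F)$, but it needs the attained minimizer $K$ for the reverse inequality. Likewise, in (d) it argues by contradiction from a minimizing escape set of diameter $d<1$.

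\textbf{Your route.} You decouple everything from attainment. First, you show that the escape-path diameters and the diameters of non-fitting compact sets form the same set. This uses completion to constant width $\diam S$ together with $W=\conv\partial W$. As a result, (b) becomes a pure threshold identity in the spirit of Proposition~\ref{prop:length-cover-escape-duality}. You then run Blaschke selection inside the constant-width class. There the limit automatically has width $D(F)>0$ and nonempty interior, so the segment case and the Steiner-point normalization disappear. In (d) you show directly that sets of diameter below one fit in $\Int U$, by shrinking a covering copy toward an interior point of $U$.

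\textbf{Trade-offs.} Your version makes (b), (c) and (d) independent of (a), and it produces a minimizer that is a genuine constant-width curve. The paper's version proves attainment using only convex-hull reduction and closedness of non-fitting, so completion enters only in (b) and (d).

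\textbf{One step to make explicit in (d).} To pass from ``copies of $tW$ fit in $\Int U$'' to ``every compact $S$ with $\diam S<1$ fits,'' complete $S$ to a body of constant width $s=\diam S$. Then write that body as $sW$ with $W$ of constant width one; the case $s=0$ is trivial.
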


\begin{proof}
We first reduce the path problem to compact convex sets.  If $\Gamma$ is the trace of an escape path, let $K=\conv\Gamma$.  Then
$\diam K=\diam\Gamma.$

Moreover, if some rigid copy of $K$ were contained in $\Int F$, the corresponding copy of $\Gamma$ would also be contained there, contradicting escape.  Conversely, if a compact convex set $K$ has no rigid copy in $\Int F$, then its boundary is an escape trace when $K$ has nonempty interior.  Indeed, if $a+Q\partial K\subset\Int F$, convexity gives
\[
    a+QK
    =
    \conv(a+Q\partial K)
    \subset\Int F,
\]
a contradiction.  If $K$ is a segment, traverse it from one endpoint to the other and back.  Thus
\begin{equation}
\label{eq:convex-reduction}
    D(F)
    =
    \inf\left\{
        \diam K:
        K\subset\R^2 \text{ compact and convex, and }
        a+QK\not\subset\Int F
        \ \forall(a,Q)
    \right\}.
\end{equation}

The admissible class is nonempty.  A segment of length $\diam F$, traversed out and back, cannot be placed in $\Int F$, because two interior points at distance $\diam F$ could be extended slightly along their joining line to produce two points of $F$ at a greater distance.

Let $(K_n)$ be a minimizing sequence for \eqref{eq:convex-reduction}.  Translate each $K_n$ so that its Steiner point is the origin.  Translation does not alter feasibility or diameter.  The Steiner point belongs to $K_n$, so a uniform diameter bound places every $K_n$ in a common closed ball.  By the Blaschke selection theorem, a subsequence converges in the Hausdorff metric to a nonempty compact convex set $K$.  Diameter is continuous under Hausdorff convergence.  Feasibility is closed: if $a+QK\subset\Int F$, compactness gives an $\varepsilon>0$ such that
\[
    a+QK+\varepsilon B\subset F,
\]
where $B$ is the closed unit disk; for all sufficiently large $n$, $K_n\subset K+\varepsilon B/2$, contradicting feasibility of $K_n$.  Hence $K$ minimizes \eqref{eq:convex-reduction}, and the preceding boundary construction yields a rectifiable closed minimizing path.  This proves \textup{(a)}.

To prove \textup{(b)}, let $d<D(F)$ and let $S$ be compact with $\diam S<d$.  By the planar completion theorem, $S$ is contained in a constant-width body $W$ of diameter strictly less than $D(F)$.  If $W$ had no rigid copy in $\Int F$, its boundary would be an escape path of diameter less than $D(F)$, a contradiction.  Therefore every such $S$ fits in $\Int F$, and $d\leq\rho_{\mathrm{diam}}(F)$.  Thus $D(F)\leq\rho_{\mathrm{diam}}(F)$.  Conversely, for every $d>D(F)$ the minimizing convex set $K$ above has diameter $D(F)<d$ and does not fit in $\Int F$, so $d$ cannot occur in the defining set for $\rho_{\mathrm{diam}}(F)$.  Hence $\rho_{\mathrm{diam}}(F)\leq D(F)$.

For \textup{(c)}, scale so that $D(F)=1$.  Let $S$ be compact with $\diam S\leq1$, choose $s_0\in S$, and put
\[
    S_n=s_0+\left(1-\frac1n\right)(S-s_0).
\]
Then $\diam S_n<1$, so each $S_n$ has a rigid copy in $\Int F$.  Write these copies as $a_n+Q_nS_n$.  Since $F$ is bounded and $Q_n\in\mathcal Q$, the translations $(a_n)$ are bounded after fixing the image of $s_0$.  Passing to a subsequence gives $a_n\to a$ and $Q_n\to Q$.  Taking limits yields
\[
    a+QS\subset F.
\]
Thus $F$ is a universal cover.  Undoing the scaling gives the stated area formula.

As for \textup{(d)}.  Part \textup{(c)} gives
\[
    \mathcal L
    \leq
    \frac{\Area(F)}{D(F)^2}
\]
for every admissible $F$, and hence
\[
    \mathcal L
    \leq
    \inf_F\frac{\Area(F)}{D(F)^2}.
\]
Conversely, let $U$ be any Lebesgue universal cover.  We claim that $D(U)\geq1$.  If $D(U)=d<1$, let $K$ be a minimizing escape set of diameter $d$.  Complete $K$ to a constant-width body $W$ of diameter $d$, and dilate $W$ about an interior point by the factor $d^{-1}>1$ to obtain a constant-width-one body $W_1$.  Universality gives a rigid copy of $W_1$ in $U$.  The corresponding copy of $W$ lies in $\Int W_1\subset\Int U$, and therefore contains a copy of $K$ in $\Int U$, contradicting escape.  Hence
\[
    \inf_F\frac{\Area(F)}{D(F)^2}
    \leq
    \frac{\Area(U)}{D(U)^2}
    \leq
    \Area(U).
\]
Taking the infimum over universal covers $U$ proves the reverse inequality and establishes \eqref{eq:global-dual}.
\end{proof}

\begin{corollary}[Scaled dual forest]
\label{cor:scaled-forest}
Let $F$ have area $A$ and critical escape diameter $D>0$.  Then
\[
    F'=D^{-1}F,
    \qquad
    \Area(F')=\frac{A}{D^2},
    \qquad
    D(F')=1.
\]
The minimum-diameter closed escape path for $F'$ has diameter exactly one, and $F'$ is a Lebesgue universal cover.  Therefore $A/D^2$ is a rigorous upper bound for $\mathcal L$.
\end{corollary}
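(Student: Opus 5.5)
The plan is to derive the corollary from Theorem~\ref{thm:diameter-duality} together with one scaling observation: for $\lambda>0$,
\[
    D(\lambda F)=\lambda D(F).
\]
I would prove this by comparing feasible paths. Take a closed path $\gamma$ and set $\lambda\gamma(t):=\lambda\gamma(t)$. If $\gamma$ is an escape path for $F$, then $\lambda\gamma$ is an escape path for $\lambda F$. Indeed, suppose
\[
    a+Q(\lambda\Gamma)\subset\Int(\lambda F)=\lambda\Int F.
\]
Dividing by $\lambda$ gives
\[
    \lambda^{-1}a+Q\Gamma\subset\Int F,
\]
which contradicts escape for $\gamma$. Since $\diam(\lambda\Gamma)=\lambda\diam\Gamma$, taking infima gives $D(\lambda F)\le\lambda D(F)$. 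Applying the same argument with $\lambda^{-1}$ and $\lambda F$ gives the reverse inequality.

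Next, set $\lambda=D^{-1}$, which is legitimate because $D>0$ is assumed. (Positivity also follows from part (b) and the interior-ball argument: any set of small diameter fits inside a ball contained in $\Int F$.) The scaling law then gives $D(F')=1$. The area identity $\Area(F')=A/D^2$ holds because the homothety by $D^{-1}$ multiplies planar area by $D^{-2}$. The set $F'$ is still compact and convex with nonempty interior, so Theorem~\ref{thm:diameter-duality}(a) applies to it. That part says the infimum defining $D(F')$ is attained by a rectifiable closed escape path, and this minimizer has diameter exactly $D(F')=1$.

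Finally, Theorem~\ref{thm:diameter-duality}(c) applied to $F$ states that $D(F)^{-1}F=F'$ is a Lebesgue universal cover. Equivalently, one can apply it to $F'$ directly, since $D(F')=1$. By the definition of $\mathcal L$ as an infimum over universal covers,
\[
    \mathcal L\le\Area(F')=\frac{A}{D^2}.
\]

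I expect no step to be a real obstacle. The only point needing care is the scaling law, specifically the identity $\Int(\lambda F)=\lambda\Int F$ and the fact that the rigid-motion group is preserved under conjugation by a homothety. Both are immediate because homotheties are homeomorphisms that commute with $\OO(2)$.
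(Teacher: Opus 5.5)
Your proposal is correct and follows the paper's route: the corollary is read off from Theorem~\ref{thm:diameter-duality}(a) applied to $F'$, and from part~(c) combined with the definition of $\mathcal L$. The one addition is that you explicitly prove the homogeneity $D(\lambda F)=\lambda D(F)$ via $\Int(\lambda F)=\lambda\Int F$; the paper uses this silently (in the ``scale so that $D(F)=1$'' step of part~(c), and as an assumed field of the normalization interface in the Lean code).
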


\section{Solution of the dual problem}

\subsection{Reversal of motion and transformed-boundary intersection}

\begin{theorem}[Reversal of motion and transformed-boundary characterization]
\label{thm:reversal-transformed-boundary}
Let $F\subset\R^2$ be a compact convex body, let
$
\gamma\in C([0,1];\R^2),
\gamma(0)=\gamma(1)=0,
\Gamma:=\gamma([0,1]),
$
and, for $(x,Q)\in F\times\mathcal Q$, define the oppositely transformed boundary
\begin{equation}
\label{eq:transformed-boundary-neighborhood}
\mathcal N_{x,Q}
:=
Q^T(\partial F-x).
\end{equation}
Then the following statements are equivalent:
\begin{enumerate}[label=\textup{(\roman*)}]
\item $\gamma$ is a diameter-escape path for $F$;
\item
$
(x+Q\Gamma)\cap\partial F\neq\varnothing
\qquad
\forall(x,Q)\in F\times\mathcal Q;
$
\item
$
\Gamma\cap\mathcal N_{x,Q}\neq\varnothing
\qquad
\forall(x,Q)\in F\times\mathcal Q;
$
\item
$
\min_{t\in[0,1]}
\dist\bigl(\gamma(t),\mathcal N_{x,Q}\bigr)=0
\qquad
\forall(x,Q)\in F\times\mathcal Q.
$
\end{enumerate}
Consequently,
\begin{equation}
\label{eq:continuous-curve-formulation}
\begin{aligned}
D(F)=\min_{\gamma\in C([0,1];\R^2)}\quad
&\max_{s,t\in[0,1]}
|\gamma(s)-\gamma(t)|,\\
\mathrm{subject\ to}\quad
&\gamma(0)=\gamma(1)=0,\\
&\min_{t\in[0,1]}
\dist\bigl(\gamma(t),\mathcal N_{x,Q}\bigr)=0,
\qquad
\forall(x,Q)\in F\times\mathcal Q.
\end{aligned}
\end{equation}
The minimum in \eqref{eq:continuous-curve-formulation} is attained.
\end{theorem}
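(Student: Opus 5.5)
The plan is to prove the cycle (i)$\Rightarrow$(ii)$\Rightarrow$(iii)$\Rightarrow$(iv)$\Rightarrow$(i). Two facts are used throughout. First, $\gamma(0)=0\in\Gamma$, so any placement of $\Gamma$ is pinned down by the image $x$ of the origin. Second, $\Gamma$ is connected, being a continuous image of $[0,1]$.

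\emph{(i)$\Rightarrow$(ii).} Fix $(x,Q)\in F\times\mathcal Q$. The set $x+Q\Gamma$ is connected and contains $x\in F$. By (i) it is not contained in $\Int F$.
\begin{itemize}[label=--]
\item If it meets $\R^2\setminus F$, then it meets both $F$ and the complement of $F$. By connectedness it cannot be split by the open set $\Int F$ and the open set $\R^2\setminus F$ alone, so it must meet $\partial F$.
\item Otherwise it lies in $F$ but not in $\Int F$, so it meets $F\setminus\Int F=\partial F$ directly.
\end{itemize}

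\emph{(ii)$\Rightarrow$(i).} Suppose, to the contrary, that $a+Q\Gamma\subset\Int F$ for some $a\in\R^2$ and $Q\in\mathcal Q$. Then $a=a+Q\cdot 0$ lies in $\Int F\subset F$. So $(a,Q)$ is an admissible pair, and $(a+Q\Gamma)\cap\partial F=\varnothing$ contradicts (ii). This step uses exactly the normalization $\gamma(0)=0$, which justifies restricting $x$ to $F$.

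\emph{(ii)$\Leftrightarrow$(iii).} The map $y\mapsto Q^T(y-x)$ is a bijection of $\R^2$ with inverse $z\mapsto x+Qz$. Hence $x+Qz\in\partial F$ if and only if $z\in\mathcal N_{x,Q}$, and the two conditions coincide.

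\emph{(iii)$\Leftrightarrow$(iv).} The set $\mathcal N_{x,Q}$ is compact, since $\partial F$ is compact, and $\Gamma$ is compact. Therefore $t\mapsto\dist(\gamma(t),\mathcal N_{x,Q})$ is continuous on $[0,1]$ and attains its minimum. That minimum is $0$ if and only if some $\gamma(t)$ lies in the closed set $\mathcal N_{x,Q}$.

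\emph{The formulation.} Combining the equivalences with translation invariance of the diameter, the feasible set of \eqref{eq:continuous-curve-formulation} is exactly the set of escape paths normalized so that $\gamma(0)=0$. The problem is well posed in two respects:
\begin{itemize}[label=--]
\item Any escape path can be translated so that $\gamma(0)=\gamma(1)=0$ without changing its diameter or its escape property.
\item The max over $s,t$ in the objective is attained by compactness.
\end{itemize}
Hence the optimal value equals $D(F)$ as defined in \eqref{eq:critical-diameter}.

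\emph{Attainment.} Theorem~\ref{thm:diameter-duality}(a) gives a closed escape path achieving $D(F)$, namely the boundary of a minimizing convex set $K$, or a segment traversed out and back. Translating it so that it starts at $0$ yields a feasible $\gamma$ with objective value $D(F)$. One detail needs care here: the boundary of $K$ must be parametrized as a continuous closed curve starting at a chosen boundary point, which we then translate to the origin.

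The main obstacle is (i)$\Rightarrow$(ii), specifically the connectedness argument together with the careful use of $\gamma(0)=0$ to restrict the translations to $x\in F$. The remaining steps are bookkeeping.
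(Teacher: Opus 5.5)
Your proposal is correct and follows essentially the same route as the paper: (ii)$\Rightarrow$(i) uses $\gamma(0)=0$ to put the witness translation in $\Int F$, (ii)$\Leftrightarrow$(iii) uses the bijection $z\mapsto x+Qz$, (iii)$\Leftrightarrow$(iv) uses compactness, and attainment is imported from Theorem~\ref{thm:diameter-duality}(a) after translating to the origin. The only cosmetic difference is in (i)$\Rightarrow$(ii): you split on whether $x+Q\Gamma$ leaves $F$ and invoke connectedness of the trace, whereas the paper splits on $x\in\partial F$ versus $x\in\Int F$ and argues along $\eta(t)=x+Q\gamma(t)$; this is the same connectedness argument.
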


\begin{proof}
Because the escape property and the diameter are invariant under translation of the canonical path, the normalization $\gamma(0)=0$ entails no loss of generality. Assume first that \textup{(i)} holds. If $x\in\partial F$, then
\[
x=x+Q\gamma(0)\in(x+Q\Gamma)\cap\partial F.
\]
If $x\in\Int F$, define
\[
\eta(t):=x+Q\gamma(t).
\]
The path $\eta$ starts at $\eta(0)=x\in\Int F$ and, by the escape property, its trace is not contained in $\Int F$. If $\eta(t_0)\in\partial F$ for some $t_0$, the required intersection follows immediately. Otherwise, there exists $t_0\in[0,1]$ such that $\eta(t_0)\notin F$. Since $\eta(0)\in\Int F$, continuity of $\eta$ implies that the path from $\eta(0)$ to $\eta(t_0)$ intersects $\partial F$. Hence \textup{(ii)} follows.

Conversely, if \textup{(i)} fails, there exist $a\in\R^2$ and $Q\in\mathcal Q$ such that
\[
a+Q\Gamma\subset\Int F.
\]
Since $\gamma(0)=0$, the starting point of this realization is
\[
x=a+Q\gamma(0)=a\in\Int F.
\]
Therefore
\[
(x+Q\Gamma)\cap\partial F=\varnothing,
\]
contradicting \textup{(ii)}. Thus \textup{(i)} and \textup{(ii)} are equivalent.

For every $y\in\R^2$,
\[
\begin{aligned}
x+Qy\in\partial F
&\iff Qy\in\partial F-x\
&\iff y\in Q^T(\partial F-x)\
&\iff y\in\mathcal N_{x,Q},
\end{aligned}
\]
where $Q^{-1}=Q^T$ because $Q\in\mathcal Q=\mathrm O(2)$. Taking $y\in\Gamma$ proves the equivalence of \textup{(ii)} and \textup{(iii)}.

Finally, both $\Gamma$ and $\mathcal N_{x,Q}$ are compact. Hence
\[
\begin{aligned}
\Gamma\cap\mathcal N_{x,Q}\neq\varnothing
&\iff
\dist(\Gamma,\mathcal N_{x,Q})=0\
&\iff
\min_{t\in[0,1]}
\dist\bigl(\gamma(t),\mathcal N_{x,Q}\bigr)=0.
\end{aligned}
\]
This proves the equivalence of \textup{(iii)} and \textup{(iv)}.

Thus the original motion problem, in which the path is translated and rotated from an unknown starting configuration, is exactly equivalent to fixing the path and applying the inverse rigid motion to the forest boundary. Formula \eqref{eq:continuous-curve-formulation} is therefore precisely the definition of $D(F)$ expressed as an intersection problem over all transformed boundaries. Attainment follows from Theorem~\ref{thm:diameter-duality}.
\end{proof}

\subsection{Equivalent convex-body and support-function formulations}

\begin{theorem}[Equivalent curve, convex-body, and support-function formulations]
\label{thm:equivalent-convex-support-formulations}
Let $F\subset\R^2$ be a compact convex body. Let $\mathcal K^2$ denote the family of nonempty compact convex subsets of $\R^2$, let $s(K)$ denote the Steiner point of $K$, and define
\[
\mathscr H_0
:=
\{
h_K:
K\in\mathcal K^2,\
s(K)=0
\}.
\]
For $h\in\mathscr H_0$ and $Q\in\mathcal Q$, define the optimized inclusion residual
\begin{equation}
\label{eq:optimized-inclusion-residual}
\Psi_F(h,Q)
:=
\min_{a\in\R^2}
\max_{u\in\Sone}
\{
a\cdot u+h(Q^Tu)-h_F(u)
\}.
\end{equation}
The minimum in \eqref{eq:optimized-inclusion-residual} is attained. Moreover, the continuous curve formulation \eqref{eq:continuous-curve-formulation} is equivalent to each of the following attained optimization problems:
\begin{equation}
\label{eq:continuous-convex-formulation}
\begin{aligned}
D(F)=\min_{K\in\mathcal K^2}\quad
&\diam K,\\
\mathrm{subject\ to}\quad
&a+QK\not\subset\Int F,
\qquad
\forall a\in\R^2,
\quad
\forall Q\in\mathcal Q,
\end{aligned}
\end{equation}
and
\begin{equation}
\label{eq:support-formulation}
\begin{aligned}
D(F)=\min_{h\in\mathscr H_0}\quad
&\max_{u\in\Sone}
\bigl(h(u)+h(-u)\bigr),\\
\mathrm{subject\ to}\quad
&\Psi_F(h,Q)\geq0,
\qquad
\forall Q\in\mathcal Q.
\end{aligned}
\end{equation}
Convexification preserves both feasibility and diameter, translation to Steiner-point normalization preserves feasibility and the objective, and an optimal convex set $K$ generates an optimal closed escape path by traversing $\partial K$, with a segment traversed from one endpoint to the other and back in the one-dimensional case.
\end{theorem}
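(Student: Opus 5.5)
The proof is a chain of reductions through results already established. The convex-body formulation \eqref{eq:continuous-convex-formulation} is exactly \eqref{eq:convex-reduction} from the proof of Theorem~\ref{thm:diameter-duality}. That proof also shows the three facts we need.

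\begin{itemize}
\item Convexification $\Gamma\mapsto\conv\Gamma$ preserves both diameter and the escape property.
\item Traversing $\partial K$ (or a segment out and back) turns any feasible $K$ back into a closed escape path with the same diameter.
\item The minimum is attained via Steiner-point normalization and Blaschke selection.
\end{itemize}

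Combined with Theorem~\ref{thm:reversal-transformed-boundary}, which identifies \eqref{eq:continuous-curve-formulation} with the definition of $D(F)$, this gives the equality of \eqref{eq:continuous-curve-formulation} and \eqref{eq:continuous-convex-formulation}, attainment in both, and the final sentence of the statement about generating an optimal path.

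For the support-function formulation, I would use the standard dictionary between $K\in\mathcal K^2$ and $h_K$, restricted to $s(K)=0$. Translation changes neither feasibility nor diameter, so this normalization loses nothing. The objective is handled by the identity $\diam K=\max_{u\in\Sone}\bigl(h_K(u)+h_K(-u)\bigr)$: diameter equals maximal width.

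For the constraint, I would use $h_{a+QK}(u)=a\cdot u+h_K(Q^Tu)$. Then $a+QK\subset F$ if and only if $a\cdot u+h_K(Q^Tu)-h_F(u)\le0$ for all $u$, so the inner maximum, call it $\phi(a,Q)$, is $\le 0$ exactly when the copy lies in $F$. The key claim is
\[
a+QK\subset\Int F \iff \max_{u\in\Sone}\{a\cdot u+h_K(Q^Tu)-h_F(u)\}<0 .
\]
The forward direction holds because a compact subset of $\Int F$ has positive distance $\varepsilon$ to $\partial F$, so $a+QK+\varepsilon B\subset F$ and the maximum is at most $-\varepsilon$. The converse holds because a negative maximum gives a margin $\varepsilon$, so $a+QK+\varepsilon B\subset F$ and hence $a+QK\subset\Int F$. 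Consequently, escape for every $a$ (with $Q$ fixed) is equivalent to $\inf_a\phi(a,Q)\ge0$.

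Attainment of the minimum over $a$ in $\Psi_F$ works as follows. The function $\phi(\cdot,Q)$ is convex in $a$, being a maximum of affine functions, and it is continuous. It is also coercive: taking $u=a/|a|$ gives $\phi(a,Q)\ge|a|-\|h_K\|_\infty-\|h_F\|_\infty$. So the minimum exists, and the constraint in \eqref{eq:support-formulation} reads $\Psi_F(h,Q)\ge0$ for all $Q$. The two problems are therefore in bijection with equal objectives, and attainment transfers from \eqref{eq:continuous-convex-formulation}.

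The main obstacle is the strict/nonstrict interface between $\Int F$ and the closed inequality $\Psi_F\ge0$. That is why the margin argument should be written carefully. It uses that $F$ is a convex body, so $F+$ (the inward $\varepsilon$-shrinkage) behaves as expected, and that the support-function inequality encodes containment for closed convex sets. Everything else is bookkeeping from Theorems~\ref{thm:diameter-duality} and \ref{thm:reversal-transformed-boundary}.
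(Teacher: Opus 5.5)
Your proposal is correct and follows essentially the same route as the paper: convexification and boundary traversal match the curve and convex-body problems, with attainment inherited from Theorem~\ref{thm:diameter-duality}, followed by Steiner-point normalization, the width identity for the diameter, $h_{a+QK}(u)=a\cdot u+h_K(Q^Tu)$, coercivity for attainment in $\Psi_F$, and an $\varepsilon$-margin argument at the strict/nonstrict interface. The only cosmetic difference is that you prove $a+QK\subset\Int F\iff\phi(a,Q)<0$ pointwise in $a$ and then quantify over $a$, whereas the paper argues with $\Psi_F$ directly through an attained minimizer $a_*$; your closing remark about an inward shrinkage of $F$ is unnecessary, since only the support-function inclusion criterion for the closed convex set $F$ is used.
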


\begin{proof}
Let $\gamma$ be feasible in \eqref{eq:continuous-curve-formulation}, let
$
\Gamma:=\gamma([0,1]),
K:=\conv\Gamma.
$
Convexification preserves the diameter:
\begin{equation}
\label{eq:diameter-convex-hull}
\diam K=\diam\Gamma.
\end{equation}
Indeed, let
\[
x=\sum_{i=1}^{m}\lambda_i x_i,
\qquad
y=\sum_{j=1}^{n}\mu_j y_j,
\]
where $x_i,y_j\in\Gamma$, $\lambda_i,\mu_j\geq0$, and
\[
\sum_{i=1}^{m}\lambda_i
=
\sum_{j=1}^{n}\mu_j
=
1.
\]
Then
\[
\begin{aligned}
|x-y|
&=
\left|
\sum_{i=1}^{m}
\sum_{j=1}^{n}
\lambda_i\mu_j(x_i-y_j)
\right|\
&\leq
\sum_{i=1}^{m}
\sum_{j=1}^{n}
\lambda_i\mu_j
|x_i-y_j|\
&\leq
\diam\Gamma.
\end{aligned}
\]
Thus $\diam K\leq\diam\Gamma$, while the reverse inequality follows from $\Gamma\subset K$, proving \eqref{eq:diameter-convex-hull}.

If some rigid copy of $K$ were contained in $\Int F$, then
\[
a+Q\Gamma\subset a+QK\subset\Int F
\]
for some $a\in\R^2$ and $Q\in\mathcal Q$, contradicting feasibility of $\gamma$. Hence $K$ is feasible in \eqref{eq:continuous-convex-formulation}.

Conversely, let $K$ be feasible in \eqref{eq:continuous-convex-formulation}. If $K$ has nonempty interior, choose a continuous closed parametrization of $\partial K$ and let $\Gamma=\partial K$. If $K$ is a nondegenerate segment, let $\Gamma=K$ and traverse the segment from one endpoint to the other and back. In either case,
\[
\conv\Gamma=K,
\qquad
\diam\Gamma=\diam K.
\]
If a rigid copy of $\Gamma$ were contained in $\Int F$, convexity of $\Int F$ would imply
\[
a+QK
=
\conv(a+Q\Gamma)
\subset\Int F,
\]
contrary to the feasibility of $K$. Thus $\Gamma$ is the trace of a closed diameter-escape path. Consequently, the curve and convex-body formulations have identical feasible objective values. By Theorem~\ref{thm:diameter-duality}, an optimal convex set exists, so both minima are attained.

The feasibility condition in \eqref{eq:continuous-convex-formulation} is invariant under translation of $K$. Indeed, for every $b\in\R^2$,
\[
a+Q(K-b)
=
(a-Qb)+QK,
\]
and $a-Qb$ ranges over all of $\R^2$ as $a$ does. The diameter is also translation invariant. Hence every feasible $K$ may be replaced by the uniquely normalized translate
$
K-s(K),
$
whose Steiner point is the origin. It therefore suffices to optimize over the support functions in $\mathscr H_0$.

Fix $h=h_K\in\mathscr H_0$ and $Q\in\mathcal Q$, and define
\[
f_{h,Q}(a)
:=
\max_{u\in\Sone}
\{
a\cdot u+h(Q^Tu)-h_F(u)
\}.
\]
Since the maximized expression is affine in $a$ and continuous jointly in $(a,u)$, the function $f_{h,Q}$ is finite, continuous, and convex. Let
\[
M
:=
\max_{u\in\Sone}
\left|
h(Q^Tu)-h_F(u)
\right|.
\]
For $a\neq0$, choosing $u=a/|a|$ gives
\[
f_{h,Q}(a)
\geq
|a|-M.
\]
Therefore
\[
f_{h,Q}(a)\longrightarrow+\infty
\qquad
\text{as }
|a|\longrightarrow+\infty.
\]
Thus $f_{h,Q}$ is coercive and attains its minimum, proving the attainment asserted in \eqref{eq:optimized-inclusion-residual}.

The support function of a translated and rotated copy of $K$ is
\begin{equation}
\label{eq:rigid-copy-support-function}
h_{a+QK}(u)
=
a\cdot u+h(Q^Tu).
\end{equation}
We claim that
\begin{equation}
\label{eq:strict-inclusion-residual}
\exists a\in\R^2:
a+QK\subset\Int F
\quad\Longleftrightarrow\quad
\Psi_F(h,Q)<0.
\end{equation}

Suppose first that
\[
a+QK\subset\Int F.
\]
Since $a+QK$ is compact and $\Int F$ is open, there exists $\varepsilon>0$ such that
\[
a+QK+\varepsilon B\subset F,
\]
where $B$ denotes the closed unit disk. By the support-function criterion for inclusion and \eqref{eq:rigid-copy-support-function},
\[
a\cdot u+h(Q^Tu)+\varepsilon
\leq
h_F(u)
\qquad
\forall u\in\Sone.
\]
Consequently,
\[
\Psi_F(h,Q)
\leq
-\varepsilon
<
0.
\]

Conversely, suppose that
\[
\Psi_F(h,Q)<0.
\]
Let $a_*\in\R^2$ attain the minimum in \eqref{eq:optimized-inclusion-residual}, and put
\[
\varepsilon:=-\Psi_F(h,Q)>0.
\]
For every $u\in\Sone$,
\[
a_*\cdot u+h(Q^Tu)-h_F(u)
\leq
\Psi_F(h,Q)
=
-\varepsilon,
\]
and therefore
\[
a_*\cdot u+h(Q^Tu)+\varepsilon
\leq
h_F(u)
\qquad
\forall u\in\Sone.
\]
Again using the support-function criterion for inclusion gives
\[
a_*+QK+\varepsilon B\subset F.
\]
Hence
\[
a_*+QK\subset\Int F,
\]
which proves \eqref{eq:strict-inclusion-residual}. It follows immediately that
\begin{equation}
\label{eq:noncontainment-residual}
\begin{aligned}
a+QK\not\subset\Int F
\quad
\forall a\in\R^2
\quad\Longleftrightarrow\quad
\Psi_F(h,Q)\geq0.
\end{aligned}
\end{equation}

It remains to express the diameter through the support function. For every compact convex set $K$,
\begin{equation}
\label{eq:diameter-support-width}
\diam K
=
\max_{u\in\Sone}
\bigl(h_K(u)+h_K(-u)\bigr).
\end{equation}
Indeed, for every $x,y\in K$,
\[
\begin{aligned}
|x-y|
&=
\max_{u\in\Sone}u\cdot(x-y)\
&\leq
\max_{u\in\Sone}
\bigl(h_K(u)+h_K(-u)\bigr).
\end{aligned}
\]
Taking the maximum over $x,y\in K$ yields
\[
\diam K
\leq
\max_{u\in\Sone}
\bigl(h_K(u)+h_K(-u)\bigr).
\]
Conversely, for each $u\in\Sone$, compactness of $K$ gives points $x_u,y_u\in K$ satisfying
\[
u\cdot x_u=h_K(u),
\qquad
u\cdot y_u=-h_K(-u).
\]
Therefore
\[
\begin{aligned}
h_K(u)+h_K(-u)
&=
u\cdot(x_u-y_u)\
&\leq
|x_u-y_u|\
&\leq
\diam K.
\end{aligned}
\]
Taking the maximum over $u\in\Sone$ proves the reverse inequality and hence \eqref{eq:diameter-support-width}.

Combining the Steiner-point normalization, the exact feasibility equivalence \eqref{eq:noncontainment-residual}, and the diameter identity \eqref{eq:diameter-support-width} establishes the equivalence of \eqref{eq:continuous-convex-formulation} and \eqref{eq:support-formulation}. Their optimal values equal the continuous critical escape diameter $D(F)$, and each formulation attains this value.
\end{proof}

\subsection{Configuration-space discretization and minimum-diameter TSPN}

Let
\[
    X:=F\times\mathcal Q.
\]
This is compact.  Equip it with the metric
\begin{equation}
\label{eq:configuration-metric}
    d_X\bigl((x,Q),(y,P)\bigr)
    :=
    \|x-y\|
    +
    \diam(F)\,\|Q-P\|_{\op}.
\end{equation}
The neighborhood map $\xi=(x,Q)\mapsto\mathcal N_{\xi}$ is Hausdorff-Lipschitz:
\begin{equation}
\label{eq:neighborhood-lipschitz}
    d_H\bigl(\mathcal N_{x,Q},\mathcal N_{y,P}\bigr)
    \leq
    d_X\bigl((x,Q),(y,P)\bigr).
\end{equation}
Indeed, pairing the same $b\in\partial F$ in the two transformed boundaries gives
\[
\begin{aligned}
    \|Q^T(b-x)-P^T(b-y)\|
    &\leq
    \|x-y\|+\|Q-P\|_{\op}\,\|b-y\|\\
    &\leq
    \|x-y\|+\diam(F)\|Q-P\|_{\op}.
\end{aligned}
\]

Let
\[
    X_m=\{\xi_1^m,\ldots,\xi_{M_m}^m\}\subset X
\]
be a finite $\eta_m$-net, with $\eta_m\downarrow0$, and write
\[
    \mathcal N_j^m:=\mathcal N_{\xi_j^m}.
\]
For selected points $p_j\in\mathcal N_j^m$ and a permutation $\pi\in\mathfrak S_{M_m}$, form the closed polygonal tour
\[
    \Gamma_{\pi,p}
    :=
    [0,p_{\pi(1)}]
    \cup
    \bigcup_{k=1}^{M_m-1}
    [p_{\pi(k)},p_{\pi(k+1)}]
    \cup
    [p_{\pi(M_m)},0].
\]
Its diameter is the diameter of its vertex set, because the tour lies in the convex hull of those vertices and a finite convex hull has the same diameter as its generating set.  Therefore the sampled problem is
\begin{equation}
\label{eq:mdtspn}
\begin{aligned}
    D_m
    &:=
    \min_{\substack{\pi\in\mathfrak S_{M_m}\\
                     p_j\in\mathcal N_j^m}}
    \diam\Gamma_{\pi,p}\\
    &=
    \min_{p_j\in\mathcal N_j^m}
    \max_{0\leq r<s\leq M_m}
    \|p_r-p_s\|,
    \qquad p_0:=0.
\end{aligned}
\end{equation}
This is a minimum-diameter traveling salesman problem with neighborhoods (MD-TSPN).  Every feasible selection can be joined in any cyclic order, so the visiting order is redundant for the primary diameter objective.  It becomes relevant only when a secondary objective, such as minimum tour length among all diameter-optimal tours, is imposed.

Each $\mathcal N_j^m$ is compact, the product of the neighborhoods is compact, and the objective in the second line of \eqref{eq:mdtspn} is continuous.  Hence every finite sampled problem admits a global minimizer.

\subsection{Exact finite formulation for polygonal forests}

Suppose $F$ is a convex polygon with cyclic vertices
\[
    v_1,\ldots,v_q,
    \qquad v_{q+1}=v_1.
\]
For $\xi_j^m=(x_j,Q_j)$, define transformed edge endpoints
\[
    a_{jk}:=Q_j^T(v_k-x_j),
    \qquad
    b_{jk}:=Q_j^T(v_{k+1}-x_j).
\]
Then
\[
    \mathcal N_j^m
    =
    \bigcup_{k=1}^{q}[a_{jk},b_{jk}].
\]
Membership $p_j\in\mathcal N_j^m$ is represented exactly by binary variables $z_{jk}$ and continuous variables $\lambda_{jk}$:
\[
    z_{jk}\in\{0,1\},
    \qquad
    0\leq\lambda_{jk}\leq z_{jk},
    \qquad
    \sum_{k=1}^{q}z_{jk}=1,
\]
\[
    p_j
    =
    \sum_{k=1}^{q}
    \left[
        (z_{jk}-\lambda_{jk})a_{jk}
        +
        \lambda_{jk}b_{jk}
    \right].
\]
Consequently, \eqref{eq:mdtspn} is the mixed-integer second-order cone program
\begin{equation}
\label{eq:misocp}
\begin{aligned}
    \min_{\Delta,p,z,\lambda}\quad &\Delta,\\
    \text{subject to}\quad
        &p_0=0,\\
        &\|p_r-p_s\|_2\leq\Delta,
        &&0\leq r<s\leq M_m,\\
        &p_j=
        \sum_{k=1}^{q}
        \left[
            (z_{jk}-\lambda_{jk})a_{jk}
            +
            \lambda_{jk}b_{jk}
        \right],
        &&1\leq j\leq M_m,\\
        &\sum_{k=1}^{q}z_{jk}=1,
        &&1\leq j\leq M_m,\\
        &0\leq\lambda_{jk}\leq z_{jk},
        \quad z_{jk}\in\{0,1\},
        &&1\leq j\leq M_m,
        \quad 1\leq k\leq q.
\end{aligned}
\end{equation}
The optimal value of \eqref{eq:misocp} is exactly $D_m$.  An explicit Hamiltonian cycle may be added through standard degree and subtour-elimination constraints, but it does not alter the minimum diameter.

A finite global-optimality claim is rigorous only when the computation supplies a mathematically valid feasible upper bound and a matching global lower bound, with exact rational/algebraic arithmetic or validated interval arithmetic where needed.  A floating-point display of zero solver gap, without validation of the model data and bounds, is numerical evidence rather than a proof.

\subsection{Convergence and certified error bounds}

\begin{theorem}[Quantitative convergence of MD-TSPN discretization]
\label{thm:discrete-convergence}
Let $X_m$ be a finite $\eta_m$-net of $X=F\times\mathcal Q$, and let $D_m$ be defined by \eqref{eq:mdtspn}.  Then
\begin{equation}
\label{eq:two-sided-diameter-bound}
    D_m
    \leq
    D(F)
    \leq
    D_m+2\eta_m.
\end{equation}
In particular, $D_m\to D(F)$ whenever $\eta_m\to0$.  If the samples are nested, then $D_m$ is nondecreasing.
\end{theorem}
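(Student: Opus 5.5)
The plan is to prove the two inequalities in \eqref{eq:two-sided-diameter-bound} separately. Each side comes from one of the equivalent formulations already established: the transformed-boundary characterization of Theorem~\ref{thm:reversal-transformed-boundary}, and the convex-body formulation \eqref{eq:continuous-convex-formulation} of Theorem~\ref{thm:equivalent-convex-support-formulations}. The convergence and monotonicity statements then follow at once.

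\emph{Lower bound $D_m\le D(F)$.} By Theorem~\ref{thm:diameter-duality}(a), choose an optimal closed escape path $\gamma$ with $\gamma(0)=0$ and $\diam\Gamma=D(F)$. By Theorem~\ref{thm:reversal-transformed-boundary}(iii), $\Gamma\cap\mathcal N_{x,Q}\neq\varnothing$ for every $(x,Q)\in X$. In particular this holds for each sample $\xi_j^m$, so we can pick $p_j\in\Gamma\cap\mathcal N_j^m$. Together with $p_0=0\in\Gamma$, this is a feasible selection in \eqref{eq:mdtspn}. Its objective value is $\max_{r,s}\|p_r-p_s\|\le\diam\Gamma=D(F)$, hence $D_m\le D(F)$.

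\emph{Upper bound $D(F)\le D_m+2\eta_m$.} Take a global minimizer $(p_j)$ of the sampled problem, which exists by compactness. Set $K:=\conv\{0,p_1,\dots,p_{M_m}\}$, so that $\diam K=D_m$. Then inflate it to $K':=K+\eta_m B$, where $B$ is the closed unit disk; this gives $\diam K'\le D_m+2\eta_m$. It remains to show that $K'$ is feasible for \eqref{eq:continuous-convex-formulation}.
\begin{itemize}[label=\textbullet]
\item Suppose, for contradiction, that $a+QK'\subset\Int F$.
\item Since $0\in K'$, we get $a\in\Int F$, so $(a,Q)\in X$.
\item Choose a net point $\xi_j^m$ with $d_X((a,Q),\xi_j^m)\le\eta_m$.
\item The Hausdorff–Lipschitz estimate \eqref{eq:neighborhood-lipschitz} gives $y\in\mathcal N_{a,Q}$ with $\|y-p_j\|\le\eta_m$, so $y\in K'$.
\item By the computation in the proof of Theorem~\ref{thm:reversal-transformed-boundary}, $y\in\mathcal N_{a,Q}$ means $a+Qy\in\partial F$.
\item This contradicts $a+QK'\subset\Int F$.
\end{itemize}
Hence $K'$ is admissible, and by Theorem~\ref{thm:equivalent-convex-support-formulations} we get $D(F)\le\diam K'\le D_m+2\eta_m$.

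The step needing the most care is this upper bound: passing from finitely many sampled neighborhoods to all configurations. The key choice is to inflate the convex hull by $\eta_m$ rather than the polygonal tour itself. This keeps the argument inside the convex-body formulation, where noncontainment is easy to test and the diameter grows by exactly $2\eta_m$. One must also make sure the net is taken with respect to the metric $d_X$ of \eqref{eq:configuration-metric}, so that \eqref{eq:neighborhood-lipschitz} applies with constant one.

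Finally, $\eta_m\to0$ squeezes $D_m\to D(F)$. If $X_m\subset X_{m+1}$, then every feasible selection for $X_{m+1}$ restricts to a feasible selection for $X_m$ with no larger objective value. So $D_m\le D_{m+1}$.
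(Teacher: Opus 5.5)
Your proposal is correct and follows the paper's proof. The lower bound samples an optimal escape trace at the net points. The upper bound thickens the optimal sampled configuration by $\eta_m$, then applies the Hausdorff--Lipschitz estimate \eqref{eq:neighborhood-lipschitz} and the convex-hull reduction.

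Thickening $K=\conv\{0,p_1,\dots,p_{M_m}\}$ instead of the tour gives the same set, since $\conv(\Gamma_m+\eta_m B)=K+\eta_m B$. Your observation that $0\in K'$ forces $a\in\Int F$, so that $(a,Q)\in X$ and the net applies, makes explicit a step the paper leaves implicit.
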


\begin{proof}
Let $\Gamma^*$ be an optimal continuous escape trace.  For each sampled neighborhood $\mathcal N_j^m$, choose
\[
    p_j\in\Gamma^*\cap\mathcal N_j^m.
\]
The closed polygonal tour through $0,p_1,\ldots,p_{M_m}$ lies in their convex hull, and hence has diameter at most $\diam\Gamma^*=D(F)$.  Therefore $D_m\leq D(F)$.

For the reverse estimate, let $\Gamma_m$ be an optimal sampled polygonal tour.  Fix any $\xi\in X$ and choose $\xi_j^m\in X_m$ with
\[
    d_X(\xi,\xi_j^m)\leq\eta_m.
\]
There exists $p\in\Gamma_m\cap\mathcal N_{\xi_j^m}$.  By \eqref{eq:neighborhood-lipschitz}, some $q\in\mathcal N_{\xi}$ satisfies $\|p-q\|\leq\eta_m$.  Hence
\[
    q\in(\Gamma_m+\eta_m B)\cap\mathcal N_{\xi}.
\]
Thus the compact connected set $\Gamma_m+\eta_m B$ intersects every transformed boundary neighborhood.  Its convex hull cannot be rigidly contained in $\Int F$; otherwise its subset $\Gamma_m+\eta_m B$ would contradict the all-neighborhood intersection property.  By the convex-hull reduction, the boundary of this convex hull is a continuous escape path.  Minkowski addition by $\eta_m B$ increases diameter by exactly $2\eta_m$, and convexification does not change diameter.  Therefore
\[
    D(F)
    \leq
    \diam(\Gamma_m+\eta_m B)
    =
    D_m+2\eta_m.
\]
If $X_m\subset X_{m+1}$, the feasible set of the sampled problem decreases with $m$, so $D_m\leq D_{m+1}$.
\end{proof}

The theorem proves both value convergence and a computable stopping criterion.  It also avoids an unjustified passage from finitely many sampled constraints to the continuum: the Hausdorff-Lipschitz estimate supplies the missing uniform control.

\begin{corollary}[Certified Lebesgue upper bounds from finite MD-TSPN]
\label{cor:finite-area-bound}
Let $A=\Area(F)$ and assume $D_m>0$.  Then
\begin{equation}
\label{eq:finite-lebesgue-upper}
    \mathcal L
    \leq
    \frac{A}{D(F)^2}
    \leq
    \frac{A}{D_m^2}.
\end{equation}
Moreover, the candidate ratio is bracketed by
\begin{equation}
\label{eq:ratio-bracket}
    \frac{A}{(D_m+2\eta_m)^2}
    \leq
    \frac{A}{D(F)^2}
    \leq
    \frac{A}{D_m^2}.
\end{equation}
Thus every exactly certified finite lower bound $D_m$ immediately yields a rigorous universal cover upper bound, and the bounds converge to the exact ratio associated with $F$.
\end{corollary}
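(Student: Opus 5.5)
The corollary follows by combining Corollary~\ref{cor:scaled-forest} (or Theorem~\ref{thm:diameter-duality}(c)) with the two-sided estimate \eqref{eq:two-sided-diameter-bound} of Theorem~\ref{thm:discrete-convergence}, and then passing to reciprocal squares. The plan has three short steps plus a remark on the word ``certified''.

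\textbf{Step 1: the first inequality in \eqref{eq:finite-lebesgue-upper}.} First observe that $D(F)>0$. Theorem~\ref{thm:diameter-duality}(b) gives $D(F)=\rho_{\mathrm{diam}}(F)$, and $\rho_{\mathrm{diam}}(F)>0$ because a small ball lies in $\Int F$; alternatively, $D(F)\ge D_m>0$ by hypothesis. Then Theorem~\ref{thm:diameter-duality}(c) shows that $D(F)^{-1}F$ is a Lebesgue universal cover of area $A/D(F)^2$. The definition of $\mathcal L$ as an infimum therefore gives $\mathcal L\le A/D(F)^2$.

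\textbf{Step 2: the remaining inequalities.} Theorem~\ref{thm:discrete-convergence} gives
\[
0<D_m\le D(F)\le D_m+2\eta_m .
\]
All three quantities are positive and $t\mapsto A/t^2$ is decreasing on $(0,\infty)$, so inverting yields
\[
\frac{A}{(D_m+2\eta_m)^2}\le \frac{A}{D(F)^2}\le \frac{A}{D_m^2}.
\]
This is \eqref{eq:ratio-bracket}. Chaining it with Step 1 gives \eqref{eq:finite-lebesgue-upper}.

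\textbf{Step 3: convergence.} Since $\eta_m\to0$, both $D_m$ and $D_m+2\eta_m$ tend to $D(F)>0$. By continuity of $t\mapsto A/t^2$ at $D(F)$, both ends of the bracket converge to $A/D(F)^2$.

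\textbf{Certification remark.} There is no real technical obstacle here. The only care needed is the word ``certified''. The inequality $\mathcal L\le A/D_m^2$ remains valid if $D_m$ is replaced by any rigorously proven lower bound $\underline D_m$ with $0<\underline D_m\le D_m$, such as the global lower bound of a branch-and-bound solver validated in exact or interval arithmetic. This holds because the chain $\underline D_m\le D_m\le D(F)$ and the monotonicity argument of Step 2 apply verbatim.
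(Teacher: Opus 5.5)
Your proof is correct and follows the paper's own argument. The first inequality comes from Theorem~\ref{thm:diameter-duality}(c) together with the definition of $\mathcal L$, and everything else is the reciprocal-square form of \eqref{eq:two-sided-diameter-bound}. Your explicit positivity check, the convergence step, and the remark on replacing $D_m$ by a validated lower bound are sound additions that the paper leaves implicit.
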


\begin{proof}
Theorem~\ref{thm:diameter-duality} gives the first inequality in \eqref{eq:finite-lebesgue-upper}.  Since $D_m\leq D(F)$, the second follows.  The bracket \eqref{eq:ratio-bracket} is the reciprocal-square form of \eqref{eq:two-sided-diameter-bound}.
\end{proof}

All in all, for a prescribed convex forest $F$, the dual problem of Lebesgue's universal covering is solved by the following framework chain:
\begin{enumerate}[label=\textup{\arabic*.},leftmargin=2.2em]
    \item Compute or optimize the closed escape trace through the exact continuous formulations \eqref{eq:continuous-curve-formulation}, \eqref{eq:continuous-convex-formulation}, or \eqref{eq:support-formulation}.
    \item Equivalently, reverse the rigid motion and require intersection with every transformed boundary $\mathcal N_{x,Q}$.
    \item Discretize the compact configuration space $F\times\mathcal Q$ by an $\eta_m$-net and solve the finite MD-TSPN \eqref{eq:mdtspn}; for polygonal forests use the exact MISOCP \eqref{eq:misocp}.
    \item Certify the continuous critical diameter by
    \[
        D_m\leq D(F)\leq D_m+2\eta_m.
    \]
    \item Scale the forest by $D(F)^{-1}$, or use the conservative certified scaling $D_m^{-1}$, to obtain a universal cover.  The corresponding area bounds are
    \[
        \mathcal L
        \leq
        \frac{\Area(F)}{D(F)^2}
        \leq
        \frac{\Area(F)}{D_m^2}.
    \]
\end{enumerate}

Accordingly, the closed minimum-diameter escape problem is not merely analogous to Lebesgue's universal covering problem.  The threshold identity and normalization theorem make it an exact dual formulation: optimizing the normalized forest area over all convex forests recovers the Lebesgue universal covering constant through \eqref{eq:global-dual}.

\section{Dual problem of other universal cover problems}
\label{sec:dual-other-universal covers}

Previously, Finch and Wetzel developed the Bellman Moser duality relationship \cite{FinchWetzel2004}. Rudolf's paper \cite{Rudolf2023} also generalized Bellman Moser duality relationship to closed Minkowski escape problems. This paper develops the Bellman Lebesgue duality for universal covering problems. More generally, we extend this duality to a broader range of universal covering problems \cite{Brass2005}.

The reversal-of-motion identity
\[
a+QX\subset F
\quad\Longleftrightarrow\quad
X\subset Q^{T}(F-a)
\]
shows that moving an object inside a fixed cover is equivalent to fixing the
object and oppositely translating and rotating the cover. For translation-only
problems, this reduces to
\[
a+X\subset F
\quad\Longleftrightarrow\quad
X\subset F-a.
\]
Accordingly, the dual problem associated with a universal cover problem is to
find, for a fixed candidate cover $F$, a smallest object for which every
admissible placement fails to lie in $\operatorname{int}F$.

Let $\mathfrak F$ denote the class of compact planar sets with nonempty
interior, and let $\mathfrak F_{\mathrm{cvx}}\subset\mathfrak F$ denote the
compact convex bodies. All quantities introduced below are invariant under
translation and satisfy the natural scaling law
\[
\tau(\lambda F)=\lambda\tau(F),
\qquad
\lambda>0.
\]

\subsection{Minimum-diameter congruent universal cover}
Define the primal constant
\[
\mathcal D_{\mathrm{cong}}
:=
\inf\left\{
\operatorname{diam}U:
\begin{array}{l}
U\in\mathfrak F,\ \text{and for every compact }S\subset\mathbb R^{2} \\
\text{ with }\operatorname{diam}S\leq 1,\ \\
\text{there exist }a\in\mathbb R^{2}\text{ and }Q\in O(2) \\
\text{ such that }a+QS\subset U
\end{array}
\right\}.
\]
For $F\in\mathfrak F$, define its congruent diameter-escape threshold by
\[
d_{\mathrm{cong}}(F)
:=
\inf\left\{
\operatorname{diam}K:
\begin{array}{l}
K\subset\mathbb R^{2}\text{ is compact and convex}, \\
a+QK\not\subset\operatorname{int}F \text{ for every }a\in\mathbb R^{2}\text{ and }Q\in O(2)
\end{array}
\right\}.
\]
The associated dual problem is
$$
\mathcal D_{\mathrm{cong}}
=
\inf_{\substack{F\in\mathfrak F \\ d_{\mathrm{cong}}(F)>0}}
\frac{\operatorname{diam}F}{d_{\mathrm{cong}}(F)}.
$$
Thus, for each candidate forest $F$, the inner problem is to find a
minimum-diameter compact convex escape obstruction, while the outer problem minimizes the normalized diameter of the forest.

\subsection{Minimum-area translative universal cover for unit-diameter sets}
Define
\[
\mathcal A_{\mathrm{tr,diam}}
:=
\inf\left\{
\operatorname{Area}(U):
\begin{array}{l}
U\in\mathfrak F,\ \text{and for every compact }S\subset\mathbb R^{2} \\
\text{ with }\operatorname{diam}S\leq 1,\ \\
\text{there exists }a\in\mathbb R^{2} \text{ such that }a+S\subset U
\end{array}
\right\}.
\]
For $F\in\mathfrak F$, define the translative diameter-escape threshold
\[
d_{\mathrm{tr}}(F)
:=
\inf\left\{
\operatorname{diam}K:
\begin{array}{l}
K\subset\mathbb R^{2}\text{ is compact and convex}, \\
a+K\not\subset\operatorname{int}F \text{ for every }a\in\mathbb R^{2}
\end{array}
\right\}.
\]
The dual formulation is
$$
\mathcal A_{\mathrm{tr,diam}}
=
\inf_{\substack{F\in\mathfrak F \\ d_{\mathrm{tr}}(F)>0}}
\frac{\operatorname{Area}(F)}
{d_{\mathrm{tr}}(F)^{2}}.
$$
The difference from the congruent problem is entirely in the admissible motion
group: the orientation of the test body is fixed, and only translations are allowed.

\subsection{Minimum-area translative universal cover for unit-perimeter sets}
\paragraph{Well-posedness convention.}
The phrase ``planar set of unit perimeter'' must be restricted to compact
connected rectifiable sets, Jordan regions, or compact convex bodies. Without
a connectedness requirement, components of fixed total perimeter may be
separated by arbitrarily large distances, and no bounded universal cover can
exist. For a compact connected rectifiable planar set $S$, the convex-hull
inequality
\[
\operatorname{Per}(\operatorname{conv}S)
\leq
\operatorname{Per}(S)
\]
reduces the covering problem to compact convex bodies.
Let
\[
\mathcal K_{\mathrm{per}}
:=
\left\{
K\subset\mathbb R^{2}:
K\text{ is a compact convex body}
\right\}.
\]
Define the primal constant
\[
\mathcal A_{\mathrm{tr,per}}
:=
\inf\left\{
\operatorname{Area}(U):
\begin{array}{l}
U\in\mathfrak F,\ \text{and for every }K\in\mathcal K_{\mathrm{per}} \\
\text{ with }\operatorname{Per}(K)\leq 1,\ \\
\text{there exists }a\in\mathbb R^{2} \text{ such that }a+K\subset U
\end{array}
\right\}.
\]
For $F\in\mathfrak F$, define the translative perimeter-escape threshold
\[
p_{\mathrm{tr}}(F)
:=
\inf\left\{
\operatorname{Per}(K):
\begin{array}{l}
K\in\mathcal K_{\mathrm{per}}, \\
a+K\not\subset\operatorname{int}F \text{ for every }a\in\mathbb R^{2}
\end{array}
\right\}.
\]
Since perimeter is homogeneous of degree one and area is homogeneous of degree two, the dual problem is
$$
\mathcal A_{\mathrm{tr,per}}
=
\inf_{\substack{F\in\mathfrak F \\ p_{\mathrm{tr}}(F)>0}}
\frac{\operatorname{Area}(F)}
{p_{\mathrm{tr}}(F)^{2}}.
$$

\subsection{Minimum-area convex universal case for foldable rulers}
A finite carpenter's ruler is a vector
\[
L=(\ell_{1},\ldots,\ell_{m}),
\qquad
\ell_{i}>0,
\]
where the $\ell_i$ are the link lengths. Its scale is the length of its
longest link,
\[
\lambda(L):=\max_{1\leq i\leq m}\ell_i.
\]
A unit ruler satisfies $\lambda(L)\leq 1$.
An anchored folding of $L$ is a sequence
\[
x=(x_{0},x_{1},\ldots,x_{m})\in(\mathbb R^{2})^{m+1}
\]
such that
\[
x_{0}=0,
\qquad
x_{1}=(\ell_{1},0),
\qquad
\lVert x_{i}-x_{i-1}\rVert=\ell_i
\quad
\text{for }1\leq i\leq m.
\]
Its trace is
\[
\Gamma_x
:=
\bigcup_{i=1}^{m}[x_{i-1},x_i].
\]
Self-intersections and overlaps of links are allowed. Let
$\mathcal C(L)$ denote the compact configuration space of all anchored
foldings of $L$.
For $F\in\mathfrak F_{\mathrm{cvx}}$, define the ruler-escape threshold
\[
r_{\mathrm{fold}}(F)
:=
\inf\left\{
\lambda(L):
\begin{array}{l}
L\text{ is a finite ruler, and} \\
a+Q\Gamma_x\not\subset\operatorname{int}F \text{ for every }x\in\mathcal C(L), \ a\in\mathbb R^{2},\ Q\in O(2)
\end{array}
\right\}.
\]
Thus, an escape ruler is obstructing only when every possible folding and every
rigid placement fail.
The primal ruler constant is
\[
\mathcal A_{\mathrm{fold}}
:=
\inf\left\{
\operatorname{Area}(U):
\begin{array}{l}
U\in\mathfrak F_{\mathrm{cvx}}, \ \operatorname{diam}U=1,\ \text{and} \\
\text{every finite ruler }L\text{ with }\lambda(L)\leq 1 \text{ has a folded rigid copy contained in }U
\end{array}
\right\}.
\]
A one-link ruler of length $\operatorname{diam}F$ cannot be contained in
$\operatorname{int}F$. Consequently,
\[
r_{\mathrm{fold}}(F)
\leq
\operatorname{diam}F.
\]
The exact dual formulation is 
$$
\mathcal A_{\mathrm{fold}}
=
\inf_{\substack{
F\in\mathfrak F_{\mathrm{cvx}} \\
r_{\mathrm{fold}}(F)=\operatorname{diam}F>0
}}
\frac{\operatorname{Area}(F)}
{r_{\mathrm{fold}}(F)^{2}}.
$$

\section*{Appendix-Formalized proofs in Lean}
The appendix provides formalized proofs of Theorems 1-7 in Lean 4 code.

The following is the Lean 4 code for Theorem 1 (Constant-width reduction):
\begin{lstlisting}[style=LeanCode]
import Mathlib

noncomputable section

open Set

/-- The Euclidean plane. -/
abbrev Plane := EuclideanSpace ℝ (Fin 2)

/--
The full orthogonal group `O(2)`, represented by real linear
isometric automorphisms of the Euclidean plane.
-/
abbrev OrthogonalMap := Plane ≃ₗᵢ[ℝ] Plane

/-- The affine map `x ↦ a + Q x`. -/
def rigidAffineMap (a : Plane) (Q : OrthogonalMap) :
    Plane →ᵃ[ℝ] Plane :=
  AffineMap.const ℝ Plane a +
    Q.toLinearEquiv.toLinearMap.toAffineMap

/-- The rigid motion `x ↦ a + Q x`. -/
def rigidMotion (a : Plane) (Q : OrthogonalMap) :
    Plane → Plane :=
  rigidAffineMap a Q

@[simp]
lemma rigidMotion_apply
    (a : Plane)
    (Q : OrthogonalMap)
    (x : Plane) :
    rigidMotion a Q x = a + Q x := by
  simp [rigidMotion, rigidAffineMap]

/--
`F` covers `S` when some rigid copy of `S` is contained in `F`.
-/
def Covers (F S : Set Plane) : Prop :=
  ∃ a : Plane, ∃ Q : OrthogonalMap,
    MapsTo (rigidMotion a Q) S F

/-- The support function of a planar set. -/
def supportFunction (W : Set Plane) (u : Plane) : ℝ :=
  sSup ((fun x : Plane => inner ℝ x u) '' W)

/-- The support-function formulation of constant width one. -/
def IsConstantWidthOne (W : Set Plane) : Prop :=
  IsCompact W ∧
  Convex ℝ W ∧
  (interior W).Nonempty ∧
  ∀ u : Plane, ‖u‖ = 1 →
    supportFunction W u + supportFunction W (-u) = 1

/--
A certified constant-width-one body.

The last two fields record the standard geometric consequences needed
by the reduction theorem:

* a constant-width-one body has diameter one;
* the body is the convex hull of its frontier.
-/
structure ConstantWidthOneBody where
  carrier : Set Plane
  isConstantWidthOne : IsConstantWidthOne carrier
  diam_eq_one : Metric.diam carrier = 1
  convexHull_frontier :
    convexHull ℝ (frontier carrier) = carrier

namespace ConstantWidthOneBody

lemma isCompact (W : ConstantWidthOneBody) :
    IsCompact W.carrier :=
  W.isConstantWidthOne.1

lemma convex (W : ConstantWidthOneBody) :
    Convex ℝ W.carrier :=
  W.isConstantWidthOne.2.1

lemma interior_nonempty (W : ConstantWidthOneBody) :
    (interior W.carrier).Nonempty :=
  W.isConstantWidthOne.2.2.1

end ConstantWidthOneBody

/--
The planar completion theorem, isolated as the external geometric
input: every compact planar set of diameter at most one is contained
in a constant-width-one body.
-/
def PlanarCompletionTheorem : Prop :=
  ∀ S : Set Plane,
    IsCompact S →
    Metric.diam S ≤ 1 →
    ∃ W : ConstantWidthOneBody,
      S ⊆ W.carrier

/--
A compact convex set with nonempty interior that covers every compact
planar set of diameter at most one.
-/
def IsLebesgueUniversalCover (F : Set Plane) : Prop :=
  IsCompact F ∧
  Convex ℝ F ∧
  (interior F).Nonempty ∧
  ∀ S : Set Plane,
    IsCompact S →
    Metric.diam S ≤ 1 →
    Covers F S

/-- Statement (ii) of the constant-width reduction theorem. -/
def CoversEveryConstantWidthOneBody (F : Set Plane) : Prop :=
  ∀ W : ConstantWidthOneBody,
    Covers F W.carrier

/-- Statement (iii) of the constant-width reduction theorem. -/
def CoversEveryConstantWidthOneBoundary (F : Set Plane) : Prop :=
  ∀ W : ConstantWidthOneBody,
    Covers F (frontier W.carrier)

namespace Covers

/--
Covering is contravariantly monotone in the covered set.

If `F` covers `T` and `S ⊆ T`, then the same rigid motion proves
that `F` covers `S`.
-/
lemma mono_source
    {F S T : Set Plane}
    (hFT : Covers F T)
    (hST : S ⊆ T) :
    Covers F S := by
  rcases hFT with ⟨a, Q, hmap⟩
  refine ⟨a, Q, ?_⟩
  intro x hx
  exact hmap (hST hx)

end Covers

/--
The inverse image of a convex set under the rigid affine map
`x ↦ a + Q x` is convex.
-/
lemma convex_rigidMotion_preimage
    {F : Set Plane}
    (hF : Convex ℝ F)
    (a : Plane)
    (Q : OrthogonalMap) :
    Convex ℝ ((rigidMotion a Q) ⁻¹' F) := by
  exact hF.affine_preimage (rigidAffineMap a Q)

/--
If `F` covers a body, then it covers the body's frontier.
-/
lemma covers_boundary_of_covers_body
    {F : Set Plane}
    {W : ConstantWidthOneBody}
    (h : Covers F W.carrier) :
    Covers F (frontier W.carrier) := by
  exact
    Covers.mono_source
      h
      W.isCompact.isClosed.frontier_subset

/--
If `F` is convex and contains a rigid copy of the frontier of `W`,
then it contains the corresponding rigid copy of all of `W`.
-/
lemma covers_body_of_covers_boundary
    {F : Set Plane}
    (hF : Convex ℝ F)
    (W : ConstantWidthOneBody)
    (hboundary : Covers F (frontier W.carrier)) :
    Covers F W.carrier := by
  rcases hboundary with ⟨a, Q, hmap⟩
  refine ⟨a, Q, ?_⟩
  intro x hx
  have hfrontier_preimage :
      frontier W.carrier ⊆
        (rigidMotion a Q) ⁻¹' F := by
    intro y hy
    exact hmap hy
  have hpreimage_convex :
      Convex ℝ ((rigidMotion a Q) ⁻¹' F) :=
    convex_rigidMotion_preimage hF a Q
  have hhull :
      convexHull ℝ (frontier W.carrier) ⊆
        (rigidMotion a Q) ⁻¹' F :=
    convexHull_min
      hfrontier_preimage
      hpreimage_convex
  have hxHull :
      x ∈ convexHull ℝ (frontier W.carrier) := by
    rw [W.convexHull_frontier]
    exact hx
  exact hhull hxHull

/--
Exact constant-width reduction.

For a compact convex set `F` with nonempty interior, assuming the
planar completion theorem, statements (i), (ii), and (iii) are
equivalent.
-/
theorem exact_constantWidth_reduction
    (completion : PlanarCompletionTheorem)
    (F : Set Plane)
    (hFcompact : IsCompact F)
    (hFconvex : Convex ℝ F)
    (hFinterior : (interior F).Nonempty) :
    (IsLebesgueUniversalCover F ↔
      CoversEveryConstantWidthOneBody F) ∧
    (CoversEveryConstantWidthOneBody F ↔
      CoversEveryConstantWidthOneBoundary F) := by
  constructor
  · constructor
    /- (i) implies (ii). -/
    · intro hUniversal W
      exact
        hUniversal.2.2.2
          W.carrier
          W.isCompact
          (by
            rw [W.diam_eq_one])
    /- (ii) implies (i). -/
    · intro hBodies
      refine
        ⟨hFcompact, hFconvex, hFinterior, ?_⟩
      intro S hScompact hSdiam
      obtain ⟨W, hSW⟩ :=
        completion S hScompact hSdiam
      exact
        Covers.mono_source
          (hBodies W)
          hSW
  · constructor
    /- (ii) implies (iii). -/
    · intro hBodies W
      exact
        covers_boundary_of_covers_body
          (hBodies W)
    /- (iii) implies (ii). -/
    · intro hBoundaries W
      exact
        covers_body_of_covers_boundary
          hFconvex
          W
          (hBoundaries W)

end
\end{lstlisting}

The following is the Lean 4 code for Proposition 2 (Length-cover/escape duality with endpoint closure):
\begin{lstlisting}[style=LeanCode]
import Mathlib

open Set

/-!
# Length-cover / escape duality

This file isolates the order-theoretic core of the equality between the
interior worm-covering threshold and the Bellman escape threshold.

The abstract part applies to any complete densely ordered linear order. The
final section specializes the length codomain to `ENNReal` and represents
planar rigid motions by metric isometry equivalences.
-/

namespace LengthCoverEscape

noncomputable section

universe u v w

section Definitions

variable {Arc : Type u} {Λ : Type v}

/-- Every arc of length at most `ℓ` satisfies the fitting predicate. -/
def CoverAt [LE Λ]
    (len : Arc → Λ)
    (Fits : Arc → Prop)
    (ℓ : Λ) : Prop :=
  ∀ γ, len γ ≤ ℓ → Fits γ

/-- The set of length thresholds at which all arcs of bounded length fit. -/
def CoverLevels [LE Λ]
    (len : Arc → Λ)
    (Fits : Arc → Prop) : Set Λ :=
  {ℓ | CoverAt len Fits ℓ}

/-- The set of lengths attained by arcs that do not fit. -/
def BadLengths
    (len : Arc → Λ)
    (Fits : Arc → Prop) : Set Λ :=
  len '' {γ | ¬ Fits γ}

/-- The supremum of all valid covering levels. -/
def rhoLen [CompleteLattice Λ]
    (len : Arc → Λ)
    (Fits : Arc → Prop) : Λ :=
  sSup (CoverLevels len Fits)

/-- The infimum of all lengths attained by non-fitting arcs. -/
def betaLen [CompleteLattice Λ]
    (len : Arc → Λ)
    (Fits : Arc → Prop) : Λ :=
  sInf (BadLengths len Fits)

@[simp]
theorem mem_coverLevels_iff [LE Λ]
    (len : Arc → Λ)
    (Fits : Arc → Prop)
    (ℓ : Λ) :
    ℓ ∈ CoverLevels len Fits ↔ CoverAt len Fits ℓ := by
  rfl

@[simp]
theorem mem_badLengths_iff
    (len : Arc → Λ)
    (Fits : Arc → Prop)
    (r : Λ) :
    r ∈ BadLengths len Fits ↔
      ∃ γ, ¬ Fits γ ∧ len γ = r := by
  rfl

end Definitions

section LinearOrder

variable {Arc : Type u} {Λ : Type v}
variable [LinearOrder Λ]

/--
A level is covered exactly when it is strictly below the length of every
non-fitting arc.
-/
theorem coverAt_iff
    (len : Arc → Λ)
    (Fits : Arc → Prop)
    (ℓ : Λ) :
    CoverAt len Fits ℓ ↔
      ∀ γ, ¬ Fits γ → ℓ < len γ := by
  constructor
  · intro h γ hbad
    exact lt_of_not_ge (fun hγle => hbad (h γ hγle))
  · intro h γ hγle
    exact Classical.byContradiction fun hbad =>
      (not_lt_of_ge hγle) (h γ hbad)

end LinearOrder

section CompleteLinearOrder

variable {Arc : Type u} {Λ : Type v}
variable [CompleteLinearOrder Λ]

/-- Every level strictly below the escape infimum is a covering level. -/
theorem coverAt_of_lt_beta
    (len : Arc → Λ)
    (Fits : Arc → Prop)
    {ℓ : Λ}
    (hℓ : ℓ < betaLen len Fits) :
    CoverAt len Fits ℓ := by
  apply (coverAt_iff len Fits ℓ).2
  intro γ hbad
  have hβle : betaLen len Fits ≤ len γ := by
    apply sInf_le
    exact
      (mem_badLengths_iff len Fits (len γ)).2
        ⟨γ, hbad, rfl⟩
  exact hℓ.trans_le hβle

/--
Every level strictly above the escape infimum contains a strictly shorter
non-fitting arc.
-/
theorem exists_bad_of_beta_lt
    (len : Arc → Λ)
    (Fits : Arc → Prop)
    {ℓ : Λ}
    (hℓ : betaLen len Fits < ℓ) :
    ∃ γ, ¬ Fits γ ∧ len γ < ℓ := by
  change sInf (BadLengths len Fits) < ℓ at hℓ
  rcases (sInf_lt_iff.mp hℓ) with ⟨r, hr, hrℓ⟩
  rcases (mem_badLengths_iff len Fits r).1 hr with
    ⟨γ, hbad, hlen⟩
  exact ⟨γ, hbad, hlen.trans_lt hrℓ⟩

/--
Coverage at the exact escape threshold holds exactly when no non-fitting arc
attains the escape infimum.
-/
theorem coverAt_beta_iff_no_minimizing_bad_arc
    (len : Arc → Λ)
    (Fits : Arc → Prop) :
    CoverAt len Fits (betaLen len Fits) ↔
      ¬ ∃ γ,
        ¬ Fits γ ∧
        len γ = betaLen len Fits := by
  constructor
  · rintro h ⟨γ, hbad, hlen⟩
    exact hbad (h γ hlen.le)
  · intro h
    apply (coverAt_iff len Fits (betaLen len Fits)).2
    intro γ hbad
    have hβle : betaLen len Fits ≤ len γ := by
      apply sInf_le
      exact
        (mem_badLengths_iff len Fits (len γ)).2
          ⟨γ, hbad, rfl⟩
    exact
      lt_of_le_of_ne hβle
        (fun heq => h ⟨γ, hbad, heq.symm⟩)

/-- Any level strictly above the escape threshold is not a covering level. -/
theorem not_coverAt_of_beta_lt
    (len : Arc → Λ)
    (Fits : Arc → Prop)
    {ℓ : Λ}
    (hℓ : betaLen len Fits < ℓ) :
    ¬ CoverAt len Fits ℓ := by
  rcases exists_bad_of_beta_lt len Fits hℓ with
    ⟨γ, hbad, hγℓ⟩
  intro hcover
  exact hbad (hcover γ hγℓ.le)

end CompleteLinearOrder

section CompleteDenseLinearOrder

variable {Arc : Type u} {Λ : Type v}
variable [CompleteLinearOrder Λ] [DenselyOrdered Λ]

/-- Exact abstract length-cover/escape threshold identity. -/
theorem rhoLen_eq_betaLen
    (len : Arc → Λ)
    (Fits : Arc → Prop) :
    rhoLen len Fits = betaLen len Fits := by
  apply le_antisymm
  · change
      sSup (CoverLevels len Fits) ≤
        betaLen len Fits
    apply sSup_le
    intro ℓ hℓ
    have hcover : CoverAt len Fits ℓ :=
      (mem_coverLevels_iff len Fits ℓ).1 hℓ
    apply le_sInf
    intro r hr
    rcases (mem_badLengths_iff len Fits r).1 hr with
      ⟨γ, hbad, hlen⟩
    have hstrict : ℓ < len γ :=
      (coverAt_iff len Fits ℓ).1 hcover γ hbad
    exact hstrict.le.trans_eq hlen
  · change
      betaLen len Fits ≤
        sSup (CoverLevels len Fits)
    rw [le_sSup_iff_forall_lt]
    intro b hb
    obtain ⟨ℓ, hbℓ, hℓβ⟩ := exists_between hb
    refine ⟨ℓ, ?_, hbℓ⟩
    exact
      (mem_coverLevels_iff len Fits ℓ).2
        (coverAt_of_lt_beta len Fits hℓβ)

end CompleteDenseLinearOrder

section PlanarSpecialization

/-- The Euclidean plane used in the geometric specialization. -/
abbrev Plane :=
  EuclideanSpace ℝ (Fin 2)

variable {Arc : Type w}

/-- A trace has a rigid copy in the interior of `F`. -/
def FitsInInterior
    (trace : Arc → Set Plane)
    (F : Set Plane)
    (γ : Arc) : Prop :=
  ∃ g : Plane ≃ᵢ Plane,
    g '' trace γ ⊆ interior F

/-- A trace has a rigid copy in the closed forest `F`. -/
def FitsInClosedForest
    (trace : Arc → Set Plane)
    (F : Set Plane)
    (γ : Arc) : Prop :=
  ∃ g : Plane ≃ᵢ Plane,
    g '' trace γ ⊆ F

/--
Interior worm-covering radius, valued in the extended nonnegative reals.
-/
def interiorWormRadius
    (len : Arc → ENNReal)
    (trace : Arc → Set Plane)
    (F : Set Plane) : ENNReal :=
  rhoLen len (FitsInInterior trace F)

/--
Bellman escape length, valued in the extended nonnegative reals.
-/
def bellmanEscapeLength
    (len : Arc → ENNReal)
    (trace : Arc → Set Plane)
    (F : Set Plane) : ENNReal :=
  betaLen len (FitsInInterior trace F)

/-- Formal length-cover/escape duality for planar traces and rigid copies. -/
theorem interiorWormRadius_eq_bellmanEscapeLength
    (len : Arc → ENNReal)
    (trace : Arc → Set Plane)
    (F : Set Plane) :
    interiorWormRadius len trace F =
      bellmanEscapeLength len trace F := by
  simpa only [
    interiorWormRadius,
    bellmanEscapeLength
  ] using
    (rhoLen_eq_betaLen len (FitsInInterior trace F))

/-- Every strictly subcritical family fits in the forest interior. -/
theorem strictly_subcritical_arcs_fit
    (len : Arc → ENNReal)
    (trace : Arc → Set Plane)
    (F : Set Plane)
    {ℓ : ENNReal}
    (hℓ : ℓ < bellmanEscapeLength len trace F) :
    ∀ γ,
      len γ ≤ ℓ →
      FitsInInterior trace F γ := by
  simpa only [
    CoverAt,
    bellmanEscapeLength
  ] using
    (coverAt_of_lt_beta
      len
      (FitsInInterior trace F)
      hℓ)

/-- Every strictly supercritical level contains a shorter escape arc. -/
theorem supercritical_level_contains_escape_arc
    (len : Arc → ENNReal)
    (trace : Arc → Set Plane)
    (F : Set Plane)
    {ℓ : ENNReal}
    (hℓ : bellmanEscapeLength len trace F < ℓ) :
    ∃ γ,
      ¬ FitsInInterior trace F γ ∧
      len γ < ℓ := by
  simpa only [bellmanEscapeLength] using
    (exists_bad_of_beta_lt
      len
      (FitsInInterior trace F)
      hℓ)

end PlanarSpecialization

end

end LengthCoverEscape
\end{lstlisting}

The following is the Lean 4 code for Theorem 3 (Diameter-cover/escape duality and exact normalization) and Corollary 4 (Scaled dual forest):
\begin{lstlisting}[style=LeanCode]
import Mathlib

/-!
# Diameter-cover / escape duality: formal interface and kernel-checked deduction

This file gives:

1. concrete Lean definitions for planar rigid motions, closed paths, escape,
   the critical escape diameter, the interior diameter-covering radius,
   universal covers, and normalized area;
2. an explicit interface `EuclideanGeometryInput` isolating the deep planar
   convex-geometry results still needed for a foundational end-to-end proof;
3. a proof with no `sorry` of the main duality/normalization/global-infimum
   theorem from that interface.

The geometric input is intentionally explicit.  In particular, this file does
not conceal Blaschke selection, diametrical completion by constant-width
bodies, Lipschitz parametrization of planar convex boundaries, or compactness
of the orthogonal group behind `by_contra` or an opaque placeholder.
-/

noncomputable section

open Set Metric MeasureTheory
open scoped ENNReal Pointwise

namespace DiameterEscape

/-- The Euclidean plane. -/
abbrev Plane := EuclideanSpace ℝ (Fin 2)

/-- An element of the full orthogonal group `O(2)`. -/
abbrev OrthogonalMap := Plane ≃ₗᵢ[ℝ] Plane

/-- A planar rigid motion `x ↦ a + Qx`. -/
structure RigidMotion where
  shift : Plane
  orthogonal : OrthogonalMap

namespace RigidMotion

instance : CoeFun RigidMotion (fun _ => Plane → Plane) where
  coe g := fun x => g.shift + g.orthogonal x

/-- Image of a set under a rigid motion. -/
def image (g : RigidMotion) (S : Set Plane) : Set Plane :=
  g '' S

end RigidMotion

/-- The closed unit interval used as the parameter domain. -/
abbrev UnitI := Set.Icc (0 : ℝ) 1

def leftEndpoint : UnitI :=
  ⟨0, by constructor <;> norm_num⟩

def rightEndpoint : UnitI :=
  ⟨1, by constructor <;> norm_num⟩

/-- A continuous closed path in the Euclidean plane. -/
structure ClosedPath where
  toFun : UnitI → Plane
  continuous_toFun : Continuous toFun
  closed' : toFun leftEndpoint = toFun rightEndpoint

namespace ClosedPath

instance : CoeFun ClosedPath (fun _ => UnitI → Plane) where
  coe γ := γ.toFun

/-- Trace of a closed path. -/
def trace (γ : ClosedPath) : Set Plane :=
  Set.range γ

/-- Diameter of the trace. -/
def diameter (γ : ClosedPath) : ℝ :=
  Metric.diam γ.trace

/--
A strong, convenient formal substitute for rectifiability:
the path has a global Lipschitz parametrization.

Every Lipschitz path is rectifiable.  The planar convex-boundary theorem needed
for the application naturally produces such a parametrization.
-/
def IsLipschitzRectifiable (γ : ClosedPath) : Prop :=
  ∃ C : NNReal, LipschitzWith C γ.toFun

end ClosedPath

/-- A compact convex planar body with nonempty interior. -/
structure Forest where
  carrier : Set Plane
  isCompact_carrier : IsCompact carrier
  convex_carrier : Convex ℝ carrier
  interior_nonempty : (interior carrier).Nonempty

/-- A rigid copy of `S` is contained in the interior of `F`. -/
def FitsInterior (F : Forest) (S : Set Plane) : Prop :=
  ∃ g : RigidMotion, g.image S ⊆ interior F.carrier

/-- A rigid copy of `S` is contained in the closed body `F`. -/
def FitsClosed (F : Forest) (S : Set Plane) : Prop :=
  ∃ g : RigidMotion, g.image S ⊆ F.carrier

/-- Escape condition for a closed path. -/
def IsEscapePath (F : Forest) (γ : ClosedPath) : Prop :=
  ¬ FitsInterior F γ.trace

/-- Set of diameters of all closed escape paths. -/
def escapeDiameters (F : Forest) : Set ℝ :=
  {d | ∃ γ : ClosedPath, IsEscapePath F γ ∧ γ.diameter = d}

/-- Critical escape diameter. -/
def criticalDiameter (F : Forest) : ℝ :=
  sInf (escapeDiameters F)

/-- A number is admissible for the interior diameter-covering property. -/
def RadiusAdmissible (F : Forest) (d : ℝ) : Prop :=
  0 ≤ d ∧
    ∀ S : Set Plane,
      IsCompact S →
      Metric.diam S < d →
      FitsInterior F S

/-- Interior diameter-covering radius. -/
def diameterCoveringRadius (F : Forest) : ℝ :=
  sSup {d : ℝ | RadiusAdmissible F d}

/-- Lebesgue universal-cover property, using closed containment. -/
def IsUniversalCover (F : Forest) : Prop :=
  ∀ S : Set Plane,
    IsCompact S →
    Metric.diam S ≤ 1 →
    FitsClosed F S

/-- Planar Lebesgue area. -/
def area (F : Forest) : ENNReal :=
  volume F.carrier

/-- Scale-invariant normalized area. -/
def normalizedAreaRatio (F : Forest) : ENNReal :=
  area F / (ENNReal.ofReal (criticalDiameter F)) ^ 2

/--
Lebesgue constant: infimum of the areas of compact convex universal covers.

The inner infimum is indexed by proofs of `IsUniversalCover F`; if `F` is not
universal, that inner infimum is `⊤`, exactly as desired.
-/
def lebesgueConstant : ENNReal :=
  ⨅ F : Forest, ⨅ (_h : IsUniversalCover F), area F

/-- The global dual infimum. -/
def globalDualValue : ENNReal :=
  ⨅ F : Forest, normalizedAreaRatio F

/--
The exact output of normalizing `F` by its critical escape diameter.
-/
structure NormalizationResult (F : Forest) where
  body : Forest
  carrier_eq :
    body.carrier = (criticalDiameter F)⁻¹ • F.carrier
  universal : IsUniversalCover body
  critical_eq_one : criticalDiameter body = 1
  area_eq : area body = normalizedAreaRatio F

/--
Deep Euclidean convex-geometry input.

An end-to-end foundational formalization must prove these fields from the
concrete definitions above.  They correspond exactly to:

* positivity/finiteness of the critical diameter;
* Blaschke-selection attainment plus closedness of the escape condition;
* Lipschitz parametrization of the boundary of a planar convex body;
* diametrical completion by a constant-width body;
* the contraction/compactness argument at the endpoint;
* homogeneity of diameter and quadratic homogeneity of area;
* the dilation argument proving `D(U) ≥ 1` for universal covers.
-/
structure EuclideanGeometryInput where
  critical_pos :
    ∀ F : Forest, 0 < criticalDiameter F

  attainment :
    ∀ F : Forest,
      ∃ γ : ClosedPath,
        γ.IsLipschitzRectifiable ∧
        IsEscapePath F γ ∧
        γ.diameter = criticalDiameter F

  cover_escape_duality :
    ∀ F : Forest,
      criticalDiameter F = diameterCoveringRadius F

  normalize :
    ∀ F : Forest, NormalizationResult F

  /--
  This is the quantitative consequence of `D(U) ≥ 1`:
  `Area(U) / D(U)^2 ≤ Area(U)`.
  -/
  universal_ratio_le_area :
    ∀ (U : Forest),
      IsUniversalCover U →
      normalizedAreaRatio U ≤ area U

namespace EuclideanGeometryInput

/-- Every normalized ratio is the area of a universal cover. -/
theorem lebesgue_le_normalizedAreaRatio
    (G : EuclideanGeometryInput) (F : Forest) :
    lebesgueConstant ≤ normalizedAreaRatio F := by
  let N : NormalizationResult F := G.normalize F
  calc
    lebesgueConstant ≤ area N.body := by
      exact iInf_le_of_le N.body (iInf_le_of_le N.universal le_rfl)
    _ = normalizedAreaRatio F := N.area_eq

/-- The Lebesgue infimum is at most the global dual infimum. -/
theorem lebesgue_le_globalDualValue
    (G : EuclideanGeometryInput) :
    lebesgueConstant ≤ globalDualValue := by
  unfold globalDualValue
  refine le_iInf ?_
  intro F
  exact G.lebesgue_le_normalizedAreaRatio F

/-- The global dual infimum is at most the area of each universal cover. -/
theorem globalDualValue_le_area_of_universal
    (G : EuclideanGeometryInput)
    (U : Forest) (hU : IsUniversalCover U) :
    globalDualValue ≤ area U := by
  calc
    globalDualValue ≤ normalizedAreaRatio U := by
      exact iInf_le _ U
    _ ≤ area U := G.universal_ratio_le_area U hU

/-- The global dual infimum is at most the Lebesgue infimum. -/
theorem globalDualValue_le_lebesgue
    (G : EuclideanGeometryInput) :
    globalDualValue ≤ lebesgueConstant := by
  unfold lebesgueConstant
  refine le_iInf ?_
  intro U
  refine le_iInf ?_
  intro hU
  exact G.globalDualValue_le_area_of_universal U hU

/-- Exact global dual representation. -/
theorem lebesgue_eq_globalDualValue
    (G : EuclideanGeometryInput) :
    lebesgueConstant = globalDualValue := by
  apply le_antisymm
  · exact G.lebesgue_le_globalDualValue
  · exact G.globalDualValue_le_lebesgue

/--
Formal version of the theorem.

The first component is stronger than mere rectifiability: the minimizing path
is globally Lipschitz.
-/
theorem diameter_cover_escape_duality_and_exact_normalization
    (G : EuclideanGeometryInput) (F : Forest) :
    (∃ γ : ClosedPath,
        γ.IsLipschitzRectifiable ∧
        IsEscapePath F γ ∧
        γ.diameter = criticalDiameter F) ∧
    criticalDiameter F = diameterCoveringRadius F ∧
    Nonempty (NormalizationResult F) ∧
    lebesgueConstant = globalDualValue := by
  refine ⟨G.attainment F, G.cover_escape_duality F, ?_, G.lebesgue_eq_globalDualValue⟩
  exact ⟨G.normalize F⟩

/-- Scaled-dual-forest corollary. -/
theorem scaled_dual_forest
    (G : EuclideanGeometryInput) (F : Forest) :
    ∃ F' : Forest,
      F'.carrier = (criticalDiameter F)⁻¹ • F.carrier ∧
      area F' = normalizedAreaRatio F ∧
      criticalDiameter F' = 1 ∧
      IsUniversalCover F' ∧
      ∃ γ : ClosedPath,
        γ.IsLipschitzRectifiable ∧
        IsEscapePath F' γ ∧
        γ.diameter = 1 := by
  let N : NormalizationResult F := G.normalize F
  obtain ⟨γ, hrect, hesc, hdiam⟩ := G.attainment N.body
  refine ⟨N.body, N.carrier_eq, N.area_eq, N.critical_eq_one, N.universal, γ, hrect, hesc, ?_⟩
  calc
    γ.diameter = criticalDiameter N.body := hdiam
    _ = 1 := N.critical_eq_one

end EuclideanGeometryInput

/-!
## Exact geometric obligations for a foundational implementation

The following propositions are not assumed by the theorem above as separate
fields, but they are the natural decomposition of `EuclideanGeometryInput`.

A complete mathlib contribution should prove, in approximately this order:

1. `criticalDiameter_pos_finite`;
2. `convex_reduction`;
3. `escape_closed_under_hausdorff_limit`;
4. `blaschke_attainment`;
5. `convex_boundary_lipschitz_loop`;
6. `planar_diametrical_completion`;
7. `critical_eq_coveringRadius`;
8. `criticalDiameter_smul`;
9. `area_smul`;
10. `endpoint_compactness_universal`;
11. `universal_criticalDiameter_ge_one`.

Keeping these obligations explicit prevents an apparently complete Lean file
from silently replacing the central geometry by `sorry`.
-/

end DiameterEscape
\end{lstlisting}

The following is the Lean 4 code for Theorem 5 (Reversal of motion and transformed-boundary characterization):
\begin{lstlisting}[style=LeanCode]
import Mathlib

open Set Metric
open unitInterval

noncomputable section

namespace ReversalTransformedBoundary

/-- The Euclidean plane. -/
abbrev Plane := EuclideanSpace ℝ (Fin 2)

/-- The full orthogonal group `O(2)`, represented by linear isometric equivalences. -/
abbrev Orthogonal := Plane ≃ₗᵢ[ℝ] Plane

/-- A continuous path on the closed unit interval. -/
abbrev Path := ContinuousMap I Plane

/-- The trace of a path. -/
def trace (γ : Path) : Set Plane := Set.range γ

/-- The trace after the rigid motion `y ↦ x + Q y`. -/
def rigidTrace (x : Plane) (Q : Orthogonal) (γ : Path) : Set Plane :=
  Set.range fun t : I => x + Q (γ t)

/-- The oppositely transformed boundary

`N(F,x,Q) = Q⁻¹ (frontier F - x)`.

The membership formulation is definitionally the most convenient one:
`y ∈ N(F,x,Q)` iff `x + Q y ∈ frontier F`.
-/
def transformedBoundary (F : Set Plane) (x : Plane) (Q : Orthogonal) : Set Plane :=
  {y : Plane | x + Q y ∈ frontier F}

/-- The preceding membership definition agrees with the inverse-image formula
`Q⁻¹ (frontier F - x)` from the paper. -/
theorem transformedBoundary_eq_inverse_image
    (F : Set Plane) (x : Plane) (Q : Orthogonal) :
    transformedBoundary F x Q =
      Q.symm '' ((fun z : Plane => z - x) '' frontier F) := by
  ext y
  constructor
  · intro hy
    refine ⟨Q y, ?_, by simp⟩
    refine ⟨x + Q y, hy, by simp⟩
  · rintro ⟨w, ⟨z, hz, rfl⟩, rfl⟩
    change x + Q (Q.symm (z - x)) ∈ frontier F
    simpa using hz

/-- Every oppositely transformed boundary is closed. -/
theorem transformedBoundary_isClosed
    (F : Set Plane) (x : Plane) (Q : Orthogonal) :
    IsClosed (transformedBoundary F x Q) := by
  exact isClosed_frontier.preimage
    (continuous_const.add Q.continuous)

/-- A transformed boundary is nonempty whenever the original boundary is nonempty. -/
theorem transformedBoundary_nonempty
    {F : Set Plane} (x : Plane) (Q : Orthogonal)
    (hfront : (frontier F).Nonempty) :
    (transformedBoundary F x Q).Nonempty := by
  rcases hfront with ⟨z, hz⟩
  refine ⟨Q.symm (z - x), ?_⟩
  change x + Q (Q.symm (z - x)) ∈ frontier F
  simpa using hz

/-- A nonempty compact subset of the plane has nonempty frontier. -/
theorem frontier_nonempty_of_compact
    {F : Set Plane} (hcompact : IsCompact F) (hF : F.Nonempty) :
    (frontier F).Nonempty := by
  rcases hF with ⟨x, hx⟩
  rcases hcompact.exists_mem_frontier_infDist_compl_eq_dist hx with
    ⟨y, hy, _⟩
  exact ⟨y, hy⟩

/-- Formal version of “no rigid realization of the path, starting in the
interior, remains entirely in the interior.” -/
def IsEscapePath (F : Set Plane) (γ : Path) : Prop :=
  ∀ x ∈ interior F, ∀ Q : Orthogonal,
    ¬ (rigidTrace x Q γ ⊆ interior F)

/-- Direct transformed-trace/boundary hitting condition. -/
def BoundaryHit (F : Set Plane) (γ : Path) : Prop :=
  ∀ x ∈ F, ∀ Q : Orthogonal,
    ∃ t : I, x + Q (γ t) ∈ frontier F

/-- Fixed-path/oppositely-transformed-boundary hitting condition. -/
def OppositeBoundaryHit (F : Set Plane) (γ : Path) : Prop :=
  ∀ x ∈ F, ∀ Q : Orthogonal,
    ∃ t : I, γ t ∈ transformedBoundary F x Q

/-- Formal meaning of

`min_t dist(γ(t), N(F,x,Q)) = 0`.

The witness `t` both realizes value zero and is a global minimizer. -/
def ZeroMinimumDistance (F : Set Plane) (γ : Path) : Prop :=
  ∀ x ∈ F, ∀ Q : Orthogonal,
    ∃ t : I,
      Metric.infDist (γ t) (transformedBoundary F x Q) = 0 ∧
      ∀ s : I,
        Metric.infDist (γ t) (transformedBoundary F x Q) ≤
          Metric.infDist (γ s) (transformedBoundary F x Q)

/-- For a convex set with nonempty interior, taking the interior does not
change the frontier. -/
theorem frontier_interior_eq_frontier
    {F : Set Plane} (hconv : Convex ℝ F)
    (hint : (interior F).Nonempty) :
    frontier (interior F) = frontier F := by
  simp only [frontier, interior_interior,
    hconv.closure_interior_eq_closure_of_nonempty_interior hint]

/-- A continuous path that starts in the interior and does not stay in the
interior must hit the frontier.  This is the rigorous connectedness step that
is implicit in the informal proof. -/
theorem exists_frontier_hit_of_not_subset_interior
    {F : Set Plane} (hconv : Convex ℝ F)
    (hint : (interior F).Nonempty)
    (η : Path) (hη0 : η 0 ∈ interior F)
    (hnot : ¬ (Set.range η ⊆ interior F)) :
    ∃ t : I, η t ∈ frontier F := by
  by_contra hhit
  push Not at hhit
  have hrange_subset :
      Set.range η ⊆ interior F ∪ (closure (interior F))ᶜ := by
    rintro y ⟨t, rfl⟩
    by_cases hy : η t ∈ interior F
    · exact Or.inl hy
    · refine Or.inr ?_
      intro hycl
      apply hhit t
      rw [← frontier_interior_eq_frontier hconv hint, frontier,
        interior_interior]
      exact ⟨hycl, hy⟩
  have hdisj :
      Disjoint (interior F) (closure (interior F))ᶜ := by
    refine Set.disjoint_left.2 ?_
    intro y hy hyc
    exact hyc (subset_closure hy)
  have hconn : IsPreconnected (Set.range η) :=
    isPreconnected_range η.continuous
  rcases hconn.subset_or_subset isOpen_interior
      isClosed_closure.isOpen_compl hdisj hrange_subset with hin | hout
  · exact hnot hin
  · have h0out : η 0 ∈ (closure (interior F))ᶜ :=
      hout ⟨0, rfl⟩
    exact h0out (subset_closure hη0)

/-- Escape is equivalent to hitting the physical boundary for every starting
point in `F` and every orthogonal map. -/
theorem escape_iff_boundaryHit
    {F : Set Plane} {γ : Path}
    (hconv : Convex ℝ F) (hint : (interior F).Nonempty)
    (hγ0 : γ 0 = 0) :
    IsEscapePath F γ ↔ BoundaryHit F γ := by
  constructor
  · intro hesc x hxF Q
    by_cases hxint : x ∈ interior F
    · let η : Path :=
        ⟨fun t : I => x + Q (γ t),
          continuous_const.add (Q.continuous.comp γ.continuous)⟩
      have hη0 : η 0 ∈ interior F := by
        simpa [η, hγ0] using hxint
      have hnot : ¬ (Set.range η ⊆ interior F) := by
        simpa [η, rigidTrace] using hesc x hxint Q
      rcases exists_frontier_hit_of_not_subset_interior
          hconv hint η hη0 hnot with ⟨t, ht⟩
      refine ⟨t, ?_⟩
      simpa [η] using ht
    · have hxfront : x ∈ frontier F := by
        rw [frontier]
        exact ⟨subset_closure hxF, hxint⟩
      refine ⟨0, ?_⟩
      simpa [hγ0] using hxfront
  · intro hhit x hxint Q hsub
    rcases hhit x (interior_subset hxint) Q with ⟨t, ht⟩
    have hyint : x + Q (γ t) ∈ interior F := by
      apply hsub
      exact ⟨t, rfl⟩
    rw [frontier] at ht
    exact ht.2 hyint

/-- Reversal of the rigid motion is a definitional equivalence. -/
theorem boundaryHit_iff_oppositeBoundaryHit
    {F : Set Plane} {γ : Path} :
    BoundaryHit F γ ↔ OppositeBoundaryHit F γ := by
  rfl

/-- The direct hitting predicate is exactly the set-intersection statement
`(x + Q Γ) ∩ frontier F ≠ ∅`. -/
theorem boundaryHit_iff_intersection
    {F : Set Plane} {γ : Path} :
    BoundaryHit F γ ↔
      ∀ x ∈ F, ∀ Q : Orthogonal,
        (rigidTrace x Q γ ∩ frontier F).Nonempty := by
  constructor
  · intro h x hx Q
    rcases h x hx Q with ⟨t, ht⟩
    exact ⟨x + Q (γ t), ⟨t, rfl⟩, ht⟩
  · intro h x hx Q
    rcases h x hx Q with ⟨y, hytrace, hyfront⟩
    rcases hytrace with ⟨t, rfl⟩
    exact ⟨t, hyfront⟩

/-- The reversed hitting predicate is exactly
`Γ ∩ N(F,x,Q) ≠ ∅`. -/
theorem oppositeBoundaryHit_iff_intersection
    {F : Set Plane} {γ : Path} :
    OppositeBoundaryHit F γ ↔
      ∀ x ∈ F, ∀ Q : Orthogonal,
        (trace γ ∩ transformedBoundary F x Q).Nonempty := by
  constructor
  · intro h x hx Q
    rcases h x hx Q with ⟨t, ht⟩
    exact ⟨γ t, ⟨t, rfl⟩, ht⟩
  · intro h x hx Q
    rcases h x hx Q with ⟨y, hytrace, hyN⟩
    rcases hytrace with ⟨t, rfl⟩
    exact ⟨t, hyN⟩

/-- For nonempty boundary, intersection with the transformed boundary is
exactly the zero-minimum point-to-set distance condition. -/
theorem oppositeBoundaryHit_iff_zeroMinimumDistance
    {F : Set Plane} {γ : Path}
    (hfront : (frontier F).Nonempty) :
    OppositeBoundaryHit F γ ↔ ZeroMinimumDistance F γ := by
  constructor
  · intro hhit x hx Q
    rcases hhit x hx Q with ⟨t, ht⟩
    have hclosed : IsClosed (transformedBoundary F x Q) :=
      transformedBoundary_isClosed F x Q
    have hne : (transformedBoundary F x Q).Nonempty :=
      transformedBoundary_nonempty x Q hfront
    have hz :
        Metric.infDist (γ t) (transformedBoundary F x Q) = 0 :=
      (hclosed.mem_iff_infDist_zero hne).1 ht
    refine ⟨t, hz, ?_⟩
    intro s
    rw [hz]
    exact Metric.infDist_nonneg
  · intro hzero x hx Q
    rcases hzero x hx Q with ⟨t, hz, _⟩
    have hclosed : IsClosed (transformedBoundary F x Q) :=
      transformedBoundary_isClosed F x Q
    have hne : (transformedBoundary F x Q).Nonempty :=
      transformedBoundary_nonempty x Q hfront
    refine ⟨t, ?_⟩
    exact (hclosed.mem_iff_infDist_zero hne).2 hz

/-- Complete equivalence of the four formulations in the theorem. -/
theorem reversal_transformed_boundary_characterization
    {F : Set Plane} {γ : Path}
    (hcompact : IsCompact F) (hconv : Convex ℝ F)
    (hint : (interior F).Nonempty)
    (hγ0 : γ 0 = 0) (_hγ1 : γ 1 = 0) :
    (IsEscapePath F γ ↔ BoundaryHit F γ) ∧
    (BoundaryHit F γ ↔ OppositeBoundaryHit F γ) ∧
    (OppositeBoundaryHit F γ ↔ ZeroMinimumDistance F γ) := by
  have hF : F.Nonempty := hint.mono interior_subset
  have hfront : (frontier F).Nonempty :=
    frontier_nonempty_of_compact hcompact hF
  exact ⟨escape_iff_boundaryHit hconv hint hγ0,
    boundaryHit_iff_oppositeBoundaryHit,
    oppositeBoundaryHit_iff_zeroMinimumDistance hfront⟩

/-- A convenient single chained equivalence. -/
theorem escape_iff_zeroMinimumDistance
    {F : Set Plane} {γ : Path}
    (hcompact : IsCompact F) (hconv : Convex ℝ F)
    (hint : (interior F).Nonempty) (hγ0 : γ 0 = 0) :
    IsEscapePath F γ ↔ ZeroMinimumDistance F γ := by
  have hF : F.Nonempty := hint.mono interior_subset
  have hfront : (frontier F).Nonempty :=
    frontier_nonempty_of_compact hcompact hF
  calc
    IsEscapePath F γ ↔ BoundaryHit F γ :=
      escape_iff_boundaryHit hconv hint hγ0
    _ ↔ OppositeBoundaryHit F γ :=
      boundaryHit_iff_oppositeBoundaryHit
    _ ↔ ZeroMinimumDistance F γ :=
      oppositeBoundaryHit_iff_zeroMinimumDistance hfront

/-! ## Transfer of the optimization problem

The reversal theorem identifies the feasible sets.  Existence of a global
minimizer is a separate compactness/duality theorem; it cannot be derived from
the hypotheses above alone.  The next declarations formalize the exact way in
which such an external attainment theorem transfers to the transformed-
boundary formulation.
-/

/-- Diameter of the path trace.  For a continuous path on `I`, this is the
same quantity as `max_{s,t ∈ I} dist (γ s) (γ t)`. -/
def curveDiameter (γ : Path) : ℝ := Metric.diam (trace γ)

/-- Closed, normalized escape-path feasibility. -/
def EscapeAdmissible (F : Set Plane) (γ : Path) : Prop :=
  γ 0 = 0 ∧ γ 1 = 0 ∧ IsEscapePath F γ

/-- Closed, normalized transformed-boundary feasibility. -/
def BoundaryAdmissible (F : Set Plane) (γ : Path) : Prop :=
  γ 0 = 0 ∧ γ 1 = 0 ∧ ZeroMinimumDistance F γ

/-- Global minimizer of the diameter objective over a feasibility predicate. -/
def IsDiameterMinimizer (A : Path → Prop) (γ : Path) : Prop :=
  A γ ∧ ∀ δ : Path, A δ → curveDiameter γ ≤ curveDiameter δ

/-- The two feasible sets are extensionally identical. -/
theorem escapeAdmissible_iff_boundaryAdmissible
    {F : Set Plane} {γ : Path}
    (hcompact : IsCompact F) (hconv : Convex ℝ F)
    (hint : (interior F).Nonempty) :
    EscapeAdmissible F γ ↔ BoundaryAdmissible F γ := by
  constructor
  · rintro ⟨hγ0, hγ1, hesc⟩
    exact ⟨hγ0, hγ1,
      (escape_iff_zeroMinimumDistance hcompact hconv hint hγ0).1 hesc⟩
  · rintro ⟨hγ0, hγ1, hzero⟩
    exact ⟨hγ0, hγ1,
      (escape_iff_zeroMinimumDistance hcompact hconv hint hγ0).2 hzero⟩

/-- Global minimizers are preserved by the reversal characterization. -/
theorem diameterMinimizer_escape_iff_boundary
    {F : Set Plane} {γ : Path}
    (hcompact : IsCompact F) (hconv : Convex ℝ F)
    (hint : (interior F).Nonempty) :
    IsDiameterMinimizer (EscapeAdmissible F) γ ↔
      IsDiameterMinimizer (BoundaryAdmissible F) γ := by
  constructor
  · rintro ⟨hγ, hopt⟩
    refine ⟨(escapeAdmissible_iff_boundaryAdmissible
      hcompact hconv hint).1 hγ, ?_⟩
    intro δ hδ
    exact hopt δ ((escapeAdmissible_iff_boundaryAdmissible
      hcompact hconv hint).2 hδ)
  · rintro ⟨hγ, hopt⟩
    refine ⟨(escapeAdmissible_iff_boundaryAdmissible
      hcompact hconv hint).2 hγ, ?_⟩
    intro δ hδ
    exact hopt δ ((escapeAdmissible_iff_boundaryAdmissible
      hcompact hconv hint).1 hδ)

/-- If the earlier diameter-duality theorem supplies an attained minimizer in
the original escape formulation, then the displayed transformed-boundary
optimization problem has an attained minimizer as well. -/
theorem continuous_curve_formulation_attained
    {F : Set Plane}
    (hcompact : IsCompact F) (hconv : Convex ℝ F)
    (hint : (interior F).Nonempty)
    (hduality : ∃ γ : Path,
      IsDiameterMinimizer (EscapeAdmissible F) γ) :
    ∃ γ : Path, IsDiameterMinimizer (BoundaryAdmissible F) γ := by
  rcases hduality with ⟨γ, hγ⟩
  exact ⟨γ,
    (diameterMinimizer_escape_iff_boundary
      hcompact hconv hint).1 hγ⟩

end ReversalTransformedBoundary
\end{lstlisting}

The following is the Lean 4 code for Theorem 6 (Equivalent curve, convex-body, and support-function formulations):
\lstinputlisting[style=LeanCode]{Lebesgue6.lean}

The following is the Lean 4 code for Theorem 7 (Quantitative convergence of MD-TSPN discretization):
\begin{lstlisting}[style=LeanCode]
import Mathlib

/-!
# Quantitative convergence of MD-TSPN discretization

This file formalizes the logical core of the theorem in a reusable form.
The geometric argument is isolated into two transfer principles:

* `sample_from_continuous`: a feasible continuous escape trace produces a
  feasible sampled tour with no larger diameter;
* `thicken_sampled`: a feasible sampled tour produces a feasible continuous
  escape trace after thickening, with cost increase at most `2 * η m`.

For the concrete planar MD-TSPN problem, these hypotheses encode the
transformed-boundary intersection theorem, the Hausdorff-Lipschitz estimate,
convex-hull reduction, and the diameter formula for Minkowski thickening.
-/

noncomputable section

namespace MDTSPN

/--
`x` is a global minimizer of `cost` over the feasible set, and its optimal
value is `v`.
-/
structure IsOptimal {α : Type*}
    (feasiblePred : α → Prop)
    (cost : α → ℝ)
    (x : α)
    (v : ℝ) : Prop where
  is_feasible : feasiblePred x
  cost_eq : cost x = v
  value_le : ∀ y, feasiblePred y → v ≤ cost y

/--
The closed metric `r`-thickening of `S`, written in elementary witness form.
-/
def ClosedThickening {E : Type*} [PseudoMetricSpace E]
    (r : ℝ) (S : Set E) : Set E :=
  {q | ∃ p ∈ S, dist q p ≤ r}

/--
The uniform-control step supplied by an `η`-net and a pointwise
Hausdorff-Lipschitz neighborhood map.

A sampled intersection point can be displaced by at most `η` to an
intersection point with an arbitrary unsampled neighborhood. Therefore the
closed `η`-thickening of the sampled trace meets every neighborhood.
-/
theorem thickening_hits_every_neighborhood
    {X E I : Type*}
    [PseudoMetricSpace X]
    [PseudoMetricSpace E]
    (sample : I → X)
    (neighborhood : X → Set E)
    (trace : Set E)
    (η : ℝ)
    (net :
      ∀ ξ : X, ∃ i : I, dist ξ (sample i) ≤ η)
    (sampled_hits :
      ∀ i : I,
        ∃ p : E,
          p ∈ trace ∧
          p ∈ neighborhood (sample i))
    (neighborhood_lipschitz :
      ∀ ξ ζ : X,
        ∀ p ∈ neighborhood ζ,
          ∃ q ∈ neighborhood ξ,
            dist q p ≤ dist ξ ζ) :
    ∀ ξ : X,
      (ClosedThickening η trace ∩ neighborhood ξ).Nonempty := by
  intro ξ
  rcases net ξ with ⟨i, hi⟩
  rcases sampled_hits i with ⟨p, hp_trace, hp_sample⟩
  rcases neighborhood_lipschitz ξ (sample i) p hp_sample with
    ⟨q, hq_neighborhood, hqp⟩
  refine ⟨q, ?_⟩
  constructor
  · exact ⟨p, hp_trace, hqp.trans hi⟩
  · exact hq_neighborhood

/--
Elementary epsilon definition of convergence of a real sequence.
-/
def SeqConvergesTo (u : ℕ → ℝ) (L : ℝ) : Prop :=
  ∀ ε : ℝ,
    0 < ε →
      ∃ N : ℕ,
        ∀ n : ℕ,
          N ≤ n →
            |u n - L| < ε

section AbstractDiscretization

variable {ContinuousTour SampledTour : Type*}

variable
  (continuousFeasible : ContinuousTour → Prop)
  (sampledFeasible : ℕ → SampledTour → Prop)
  (continuousCost : ContinuousTour → ℝ)
  (sampledCost : SampledTour → ℝ)
  (D : ℝ)
  (Dm η : ℕ → ℝ)

/--
Quantitative two-sided estimate for the MD-TSPN discretization.

The hypotheses are the precise interfaces supplied by the geometric proof:

* existence and global optimality for the continuous problem;
* existence and global optimality for each sampled problem;
* restriction of a continuous feasible candidate to the sampled constraints;
* extension of a sampled feasible candidate by `η m`-thickening.
-/
theorem quantitative_convergence_bound
    (continuous_optimal :
      ∃ γstar,
        IsOptimal
          continuousFeasible
          continuousCost
          γstar
          D)
    (sampled_optimal :
      ∀ m,
        ∃ τstar,
          IsOptimal
            (sampledFeasible m)
            sampledCost
            τstar
            (Dm m))
    (sample_from_continuous :
      ∀ m γ,
        continuousFeasible γ →
          ∃ τ,
            sampledFeasible m τ ∧
            sampledCost τ ≤ continuousCost γ)
    (thicken_sampled :
      ∀ m τ,
        sampledFeasible m τ →
          ∃ γ,
            continuousFeasible γ ∧
            continuousCost γ ≤ sampledCost τ + 2 * η m)
    (m : ℕ) :
    Dm m ≤ D ∧
    D ≤ Dm m + 2 * η m := by
  constructor
  · rcases continuous_optimal with ⟨γstar, hγstar⟩
    rcases
        sample_from_continuous
          m
          γstar
          hγstar.is_feasible with
      ⟨τ, hτfeasible, hτcost⟩
    rcases sampled_optimal m with ⟨τstar, hτstar⟩
    calc
      Dm m ≤ sampledCost τ :=
        hτstar.value_le τ hτfeasible
      _ ≤ continuousCost γstar :=
        hτcost
      _ = D :=
        hγstar.cost_eq
  · rcases sampled_optimal m with ⟨τstar, hτstar⟩
    rcases
        thicken_sampled
          m
          τstar
          hτstar.is_feasible with
      ⟨γ, hγfeasible, hγcost⟩
    rcases continuous_optimal with ⟨γstar, hγstar⟩
    calc
      D ≤ continuousCost γ :=
        hγstar.value_le γ hγfeasible
      _ ≤ sampledCost τstar + 2 * η m :=
        hγcost
      _ = Dm m + 2 * η m := by
        rw [hτstar.cost_eq]

/--
The two-sided estimate gives the explicit error bound

`|Dm m - D| ≤ 2 * η m`.
-/
theorem quantitative_error_bound
    (two_sided :
      ∀ m,
        Dm m ≤ D ∧
        D ≤ Dm m + 2 * η m)
    (m : ℕ) :
    |Dm m - D| ≤ 2 * η m := by
  have hlower : Dm m ≤ D :=
    (two_sided m).1
  have hupper : D ≤ Dm m + 2 * η m :=
    (two_sided m).2
  rw [abs_of_nonpos (sub_nonpos.mpr hlower)]
  linarith

/--
The sampled values converge to the continuous optimum whenever
`η m → 0`.
-/
theorem value_convergence
    (η_nonneg : ∀ m, 0 ≤ η m)
    (two_sided :
      ∀ m,
        Dm m ≤ D ∧
        D ≤ Dm m + 2 * η m)
    (η_tends_to_zero :
      SeqConvergesTo η 0) :
    SeqConvergesTo Dm D := by
  intro ε hε
  rcases
      η_tends_to_zero
        (ε / 2)
        (by linarith) with
    ⟨N, hN⟩
  refine ⟨N, ?_⟩
  intro n hn
  have hηabs :
      |η n - 0| < ε / 2 :=
    hN n hn
  have hη :
      η n < ε / 2 := by
    simpa [sub_zero, abs_of_nonneg (η_nonneg n)] using hηabs
  have hlower :
      Dm n ≤ D :=
    (two_sided n).1
  have hupper :
      D ≤ Dm n + 2 * η n :=
    (two_sided n).2
  rw [abs_of_nonpos (sub_nonpos.mpr hlower)]
  linarith

/--
Nested sampled feasible sets imply monotonicity of the sampled optimal values.
-/
theorem sampled_values_monotone
    (sampled_optimal :
      ∀ m,
        ∃ τstar,
          IsOptimal
            (sampledFeasible m)
            sampledCost
            τstar
            (Dm m))
    (nested_feasible :
      ∀ m τ,
        sampledFeasible (m + 1) τ →
        sampledFeasible m τ) :
    Monotone Dm := by
  apply monotone_nat_of_le_succ
  intro m
  rcases sampled_optimal m with
    ⟨τm, hτm⟩
  rcases sampled_optimal (m + 1) with
    ⟨τnext, hτnext⟩
  have hτnext_at_m :
      sampledFeasible m τnext :=
    nested_feasible
      m
      τnext
      hτnext.is_feasible
  calc
    Dm m ≤ sampledCost τnext :=
      hτm.value_le τnext hτnext_at_m
    _ = Dm (m + 1) :=
      hτnext.cost_eq

end AbstractDiscretization

section LebesgueRatio

/--
Reciprocal-square consequences of

`Dm ≤ D ≤ Dm + 2η`,

including the certified Lebesgue upper bound.
-/
theorem certified_lebesgue_bounds
    {A L D Dm η : ℝ}
    (area_nonneg : 0 ≤ A)
    (sample_positive : 0 < Dm)
    (mesh_nonneg : 0 ≤ η)
    (lower_bound : Dm ≤ D)
    (upper_bound : D ≤ Dm + 2 * η)
    (diameter_duality : L ≤ A / D ^ 2) :
    L ≤ A / D ^ 2 ∧
    A / D ^ 2 ≤ A / Dm ^ 2 ∧
    A / (Dm + 2 * η) ^ 2 ≤ A / D ^ 2 := by
  have hD :
      0 < D :=
    lt_of_lt_of_le sample_positive lower_bound
  have hR :
      0 < Dm + 2 * η := by
    linarith
  have hDsq :
      0 < D ^ 2 := by
    positivity
  have hDmsq :
      0 < Dm ^ 2 := by
    positivity
  have hRsq :
      0 < (Dm + 2 * η) ^ 2 := by
    positivity
  have hDm_nonneg :
      0 ≤ Dm :=
    le_of_lt sample_positive
  have hD_nonneg :
      0 ≤ D :=
    le_of_lt hD
  have hR_nonneg :
      0 ≤ Dm + 2 * η :=
    le_of_lt hR
  have hsquare_lower :
      Dm ^ 2 ≤ D ^ 2 :=
    (sq_le_sq₀ hDm_nonneg hD_nonneg).2 lower_bound
  have hsquare_upper :
      D ^ 2 ≤ (Dm + 2 * η) ^ 2 :=
    (sq_le_sq₀ hD_nonneg hR_nonneg).2 upper_bound
  refine ⟨diameter_duality, ?_⟩
  constructor
  · apply (div_le_div_iff₀ hDsq hDmsq).2
    exact
      mul_le_mul_of_nonneg_left
        hsquare_lower
        area_nonneg
  · apply (div_le_div_iff₀ hRsq hDsq).2
    exact
      mul_le_mul_of_nonneg_left
        hsquare_upper
        area_nonneg

/--
The exact candidate ratio is bounded above by the finite sampled ratio.
-/
theorem exact_ratio_le_sample_ratio
    {A D Dm : ℝ}
    (area_nonneg : 0 ≤ A)
    (sample_positive : 0 < Dm)
    (lower_bound : Dm ≤ D) :
    A / D ^ 2 ≤ A / Dm ^ 2 := by
  have hD :
      0 < D :=
    lt_of_lt_of_le sample_positive lower_bound
  have hDsq :
      0 < D ^ 2 := by
    positivity
  have hDmsq :
      0 < Dm ^ 2 := by
    positivity
  have hDm_nonneg :
      0 ≤ Dm :=
    le_of_lt sample_positive
  have hD_nonneg :
      0 ≤ D :=
    le_of_lt hD
  have hsquare :
      Dm ^ 2 ≤ D ^ 2 :=
    (sq_le_sq₀ hDm_nonneg hD_nonneg).2 lower_bound
  apply (div_le_div_iff₀ hDsq hDmsq).2
  exact
    mul_le_mul_of_nonneg_left
      hsquare
      area_nonneg

end LebesgueRatio

end MDTSPN
\end{lstlisting}

\vspace{1em}
\textbf{AI usage disclosure:} The author provided the methodological framework. GPT-5.6 sol was used to polish the language and assist the formalized proofs in Lean 4 code. 
~\\

College of Engineering and Computer Science, University of Central Florida, Orlando, FL, USA

Email: \underline{zhipeng.deng@ucf.edu}

\end{document}